\documentclass[10pt]{amsart} 
\usepackage{amsmath, amsfonts, amsthm, amssymb, amscd}
\usepackage[mathcal]{euscript}  
\usepackage{dsfont, mathrsfs, accents, verbatim,a4wide} 
\usepackage{bbm}
\usepackage{graphicx,caption,subcaption}
\usepackage{multirow}
\usepackage{enumerate}
\usepackage[foot]{amsaddr}
\theoremstyle{plain}
\newtheorem{theorem}{Theorem}[section]
\newtheorem{proposition}[theorem]{Proposition} 
\newtheorem{lemma}[theorem]{Lemma}
 
\newtheorem{definition}[theorem]{Definition} 
\newtheorem{remark}[theorem]{Remark}   
\numberwithin{equation}{section}      
\let\oldmarginpar\marginpar
\renewcommand\marginpar[1]{\-\oldmarginpar[\raggedleft\footnotesize #1]%
{\raggedright\footnotesize #1}}
\newcommand{\R}{{\mathbb{R}}}

\textheight = 642pt
\textwidth = 440pt 
\oddsidemargin = 31pt


\bibliographystyle{plain} 

\title[Nonlocal conservation laws]{Nonlocal conservation laws. I. 
\\
A new class of monotonicity-preserving models} 
 
\author[Q. Du, Z. Huang, and P.G. L{\scriptsize e}Floch]{Qiang Du$^{1,2}$, Zhan Huang$^2$, and Philippe G. L{\scriptsize e}Floch$^3$}

\address{$^1$ Department of Applied Physics and Applied Mathematics, Columbia University, New York, NY10027,  USA. Email: {\sl qd2125@columbia.edu}}
\address{ 
$^2$ Department of Mathematics,  Pennsylvania State University, University Park, PA 16802, USA. Email: {\sl zxh117@psu.edu}}
\address{
$^3$ Laboratoire Jacques-Louis Lions and Centre National de la Recherche Scientifique, 
Universit\'e Pierre et Marie Curie, 4 Place Jussieu, 75252 Paris, France. 
Email : {\sl contact@philippelefloch.org.}
\\
\hfill 
\date{November 2016}}


\begin{document}

\begin{abstract} We introduce a new class of nonlocal nonlinear conservation laws in one space dimension
that allow for nonlocal interactions over a {\em finite horizon}. The proposed model, which we refer to as the {\em nonlocal pair interaction model}, inherits at the continuum level  the unwinding feature of finite difference schemes for local hyperbolic conservation laws, so that the maximum principle and certain monotonicity properties hold and, consequently, the entropy inequalities are naturally satisfied. We establish a global-in-time well-posedness theory for these models which covers a broad class of initial data.  Moreover, in the limit when the horizon parameter approaches zero, we are able to prove that our nonlocal model reduces to the conventional class of local hyperbolic conservation laws. Furthermore, we propose a numerical discretization method adapted to our nonlocal model, which relies on a monotone numerical flux and a uniform mesh, and we establish that these numerical solutions converge to a solution, providing as by-product both the existence theory for the nonlocal model and the convergence property relating the nonlocal regime and the asymptotic local regime.  
\end{abstract}

\maketitle 


 

\section{Introduction}

\subsection{Objective of this paper}

Our aim in this paper and its companion \cite{BDL} is to propose and analyze new models of nonlocal conservation laws. We are especially interested in distinguishing between models that preserve the maximum principle (considered in the present work) and models that violate this principle (to be discussed in \cite{BDL}). We are going to introduce here a new 
model, we refer to as the ``nonlocal pair-interaction model'' and we will prove that the associated initial value problem for this model is well-posed in a class of weak solution.   

Recall that scalar one-dimensional hyperbolic conservation laws of the form
 \begin{equation}\label{eq:local_ConservationLaw}
 \begin{array}{l l}
 & \displaystyle u_t + f(u)_x =0, \quad\quad \quad (x,t)\in{\R}\times[0,T], \\[.1cm]
 &u(x,0)=u_0(x),  \quad\quad \quad\quad x\in {\R},\\
 \end{array}
\end{equation}
 have been thoroughly investigated,  both
analytically and numerically. It is well known that even if the
initial value $u_0=u_0(x)$ is smooth, the local-in-time solution may develop shock discontinuities in finite time. The equation \eqref{eq:local_ConservationLaw} should be understood in a suitably weak sense and a so-called entropy condition must be impose din order to select the physically relevant weak solutions.

On the other hand, for the numerical computation of these weak solutions, a variety of algorithms have been
proposed in the literature, among which one of the most fundamental classes is the class of three-point schemes in conservation form, that is, with standard notation
\begin{equation}\label{eq:FD_local}
u^{n+1}_{j}=u^n_{j}-\frac{\Delta t}{\Delta
x} \big(
g(u^n_{j},u^n_{j+1})-g(u^n_{j-1},u^n_{j})
\big),
\end{equation}
where the given numerical flux $g$ is a Lipschitz continuous function consistent
with the flux $f=f(u)$, in the sense that $g(u,u)=f(u)$. It  is well known that if $g=g(u,v)$ is
a monotone flux, i.e, is non-decreasing in $u$ and
non-increasing in $v$, then the scheme
(\ref{eq:FD_local}) is Total Variation Diminishing 
(TVD) and enjoys the Maximum Principle \cite{crandall1980monotone, leveque1992numerical}. 
By picking appropriate choice of $g$ satisfying the above monotonicity property, one obtains numerical solutions converging to an entropy weak solution to the (local) conservation law (\ref{eq:local_ConservationLaw}). 

Furthermore, nonlocal models arising in the literature violates the maximum principle, which seems to be a natural property to be imposed on a (scalar) physical model.  Our idea therefore is to built upon the experience acquited with numerical scheme in order to propose a new model that share many properties of  local conservation laws.  
With these motivations in mind, we thus propose here the following nonlocal generalization to  (\ref{eq:local_ConservationLaw}):
\begin{equation}
\label{eq:Cauchy}
\aligned
 \frac{\partial u}{\partial t} 
+\int^{\delta}_{0}\left(\frac{g(u,\tau_{h}u)-g(\tau_{-h}u,u)}{h}\right)\omega^{\delta}(h)dh
& =0,
  \quad &&(x,t)\in{\R}\times[0,T], 
\\
u(x,0)&=u_0(x),  \quad\quad  \quad &&x\in {\R}, 
\endaligned
\end{equation}
where $\tau_{\pm h} u(x,t)=u(x\pm h,t)$ denotes the shift operator.
We refer to 
(\ref{eq:Cauchy}) as the {\bf nonlocal pair-interaction model} which is uniquely 
determined by
 a choice of kernel $\omega^\delta
=\omega^\delta(h)$ characterizing  {\em nonlocal interactions} and
a nonlocal two-point  flux $g=g(u,v)$ (defined over pairs of points $u,v$). 

For definiteness, we assume that the nonlocal interaction
kernel $\omega^\delta =\omega^\delta(h)$  is a compactly-supported, symmetric, nonnegative density function, that is, more specifically, it is symmetric $\omega^{\delta}(-s)=\omega^{\delta}(s)$, 
nonnegative $\omega^{\delta}(s)\geq 0$, integrable $\omega^{\delta}\in L^{1}(\R)$, 
normalized so that $\|\omega^{\delta}\|_{L^1(\R)}=1$, 
and supported in some interval $[-\delta,\delta]$ (with $\delta>0$). The parameter $\delta$ is refered to as the nonlocal {\em horizon} measuring the range of interactions.
The associated nonlocal flux $g=g(u,v)$ is required to be a monotone function (over pairs of points $u,v$), consistent with the exact flux $f$.

 Formally,  equation (\ref{eq:Cauchy}) may be seen as a {\em continuum average} of
the conservative finite difference scheme (\ref{eq:FD_local}), in the sense
that the pure difference approximation with a fixed grid size is replaced by an integral of
weighted differences, with the weights given by the kernel $\omega^{\delta}=\omega^{\delta}(s)$,
on a continuum scale  up to the horizon
parameter $\delta>0$.  As a result,
the equation (\ref{eq:Cauchy}) is spatially {\em nonlocal}, that is, the pair-interaction
between points $x$ and $y$ is allowed as long as their pairwise
distance is no larger than $\delta$. In contrast, the standard
conservation law (\ref{eq:local_ConservationLaw}) is {\em local}, since
the derivative of flux $f(u)_x$ implies that interaction happens
only within infinitesimal distances (say, by contact in the language of material science).

\subsection{Background on nonlocal models}

In the existing literature, nonlocality has been introduced in various ways for the modeling of convection problems. 
For example,  many fractional convection-diffusion models involve fractional convection terms and 
include a (possibly generalized) diffusive term; see, e.g., Ervin, Hewer and Roop \cite{ervin2007numerical}, Biler and Woyczy\'{n}ski \cite{biler1998global}, Woyczy\'{n}ski \cite{woyczynski1998burgers}, and Mi\u{s}kinis \cite{mivskinis2002some}.
Alternatively, e.g., Dronio \cite{droniou2005fractal} and Alibaud et. al. \cite{alibaud2010non, alibaud2010asymptotic},
  the local convection operators are retained, and nonlocality is introduced through a fractional derivative operator that modifies 
         the diffusive term. 

More general 
        nonlocal regularization terms can be found in spectral viscosity methods introduced by Tadmor \cite{tadmor, cdt93}. Other
        nonlocal regularizations can be found in 
 \cite{schmeiser2004burgers}, Liu \cite{liu2006wave}, Chmaj \cite{chmaj2007existence}, Duan,Feller and Zhu \cite{duan2010energy}, Rohde \cite{rohde2005scalar}, Kissling and Rohde \cite{kissling2010computation}, and Kissling, LeFloch and Rohde \cite{kissling2009kinetic}. 
Nonlocal convection may also be introduced through a nonlocal regularization of the convective velocity, that is, for
a transport equation of the form   $u_t+(vu)_x=0$, we may have $v$ being an integral average of some function of $u$,
see 
for instance Zumbrun \cite{zum99}, Logan \cite{david2003nonlocal}
and Amorim, Colombo and Teixeira
 \cite{aea15}.
 More general nonlocal flux has been studied in Al\`{\i}, Hunter and Parker \cite{ali2002hamiltonian}, Benzoni-Gavage \cite{benzoni2009local}, leading to equations of the form
\begin{equation}
\label{eq:nlf}
u_t+\mathcal{F}[u]_x=0,\quad \quad 
\hat{\mathcal{F}}[u](k)=\int_{\R}\Gamma(k-l)\hat{u}(k-l)\hat{u}(l)dl, 
\end{equation}
 where the hat symbol denotes the Fourier transform.
 Ignat and Rossi \cite{ignat2007nonlocal} analyzed a linear nonlocal evolution equation that allows 
 convective effect. A similar investigation can be found in Du, Huang and Lehoucq \cite{du2014nonlocal}.  
Among earlier works, perhaps the study most closely related to our work is the one by Du, Kamm, Lehoucqs, and Parks \cite{du2012new} who analyzed a nonlocal evolution equation but in a form different from the one we propose here. In fact, the model in \cite{du2012new} failed to preserve the maximum principle which motivated us to consider better alternatives, such as 
\eqref{eq:Cauchy}. We note also that for a quadratic nonlinear flux,
both the models in \cite{du2012new} and in this paper can also be
formulated in the form of \eqref{eq:nlf} but with more general kernels $G$ which are  no longer translation invariant.

For further observations and references, we refer to our companion work \cite{BDL} which will analyze the connections between the above models and our new model \eqref{eq:Cauchy} and its variants.

\subsection{Properties of the proposed model} 

A special choice of the kernel $\omega^{\delta}$ in the nonlocal conservation law (\ref{eq:Cauchy})
allows us to recover the standard one (\ref{eq:local_ConservationLaw}),  
so our nonlocal conservation law is truly an extension of its
local counterpart.  Moreover, one may argue that nonlocal conservation laws are in fact more physical  continuum models, as they enjoy the {\sl unwinding feature} observed at the continuum level and this property is well-known to be essential (e.g. in order to prevent oscillations in numerical solutions) when the underlying solutions may not be smooth. 
The monotone and upwind flux construction implies naturally that the maximum principle remains
valid for the nonlocal model. Moreover, the classical entropy condition is also automatically satisfied by solutions of\ nonlocal models.

Our study of (\ref{eq:Cauchy}) carried out on the present paper includes the following contributions: a
 detailed model set-up for  (\ref{eq:Cauchy}) and its relation with the nonlocal conservation laws
  in the local model; 
   regularity  properties and possible development of shocks from smooth initial data; 
the convergence of numerical discretization of  (\ref{eq:Cauchy})
for a given $\delta$ but with increasing numerical resolution;
and the asymptotical compatibility of the discrete nonlocal solution  \cite{td14}
to the local entry solution when both $\delta$ and the discretization parameter aproach zero. 
As far as the discretization of the nonlocal model is concerned, we propose here a monotone scheme  (\ref{eq:nonlocal_ForwardInTime}) satisfying the maximum principle as well as the TVD stability property.  
Our main convergence result, in Theorem \ref{thm_convergence_local_nonlocal_entropy_solution}, 
 establishes that, under the nonlocal CFL condition and 
 when $\delta$ is fixed and $\Delta x\rightarrow 0$ (and thus $\Delta t\to 0$),  the numerical solution of (\ref{eq:nonlocal_ForwardInTime}) converges to the solution of the nonlocal conservation law. As $\delta$ and $\Delta x$ (and therefore $\Delta t$) tend to zero, the numerical solution converges to the unique entropy solution of the limiting local conservation law.  This study  also provides us with the uniqueness and existence of the solution to the nonlocal conservation law; see the statements in Theorems \ref{thm_uniqueness} and  \ref{corollary_existence_nonlocal_entropy_solution}, respectively.


\subsection{The relation with monotone schemes}
\label{sec:def}

We now discuss some properties of our nonlocal model (\ref{eq:Cauchy}) and compare with the class of discrete schemes. The flux  $g$ in (\ref{eq:FD_local}) may take on different
forms, as the case for the numerical solution of local hyperbolic conservation laws.
A classical example is the Godunov scheme \cite{godunov1959difference}, while another standard example is the
Lax-Friedrich scheme, expressed in the conservative form
(\ref{eq:FD_local}) with
\begin{equation}
g(u^n_j,u^n_{j+1})=\frac{1}{2}(f(u^n_j)+f(u^n_{j+1}))-\frac{\Delta
x}{2\Delta t}(u^n_{j+1}-u^n_{j}).
\end{equation}
It is convenient to recall the following standard notions: 
are given below:
\begin{itemize}
\item A conservative scheme, by definition, can be written into the form (\ref{eq:FD_local}).
\item A scheme is consistent if the flux $g$ in (\ref{eq:FD_local}) satisfies $g(u,u)=f(u)$. 
\item It is said to enjoy the maximum principle if $
\min_{k}u^0_{k}\leq u^n_j\leq\max_{k}u^0_{k}$, for all $n,j$.
\item It is said to be monotone  if 
$u^0_j\leq v^0_j$  for all $j$ implies 
$u^n_j\leq v^n_j$ for all $n, j$.

\item It is said to be Total Variation Diminishing (TVD)
if $
\sum_{j}|u^{n+1}_{j+1}-u^{n+1}_j|\leq\sum_{j}|u^n_{j+1}-u^n_{j}|$.
\end{itemize}

 It is well known that if $g$ is monotone, i.e,
non-decreasing on the first argument, and non-increasing on the
second argument, then under the local CFL condition (for all relevant values $a,b$) 
\begin{equation}
\frac{\Delta t}{\Delta x}\left(\left|\frac{\partial g(a,b)}{\partial
a}\right|+\left|\frac{\partial g(a,b)}{\partial b}\right|\right)\leq
1,
\end{equation}
then \eqref{eq:FD_local} is a conservative and monotone scheme.
Moreover, a conservative monotone scheme is automatically monotonicity preserving, 
maximum principle preserving and TVD \cite{leveque1992numerical}.  

A number of early constructions of approximate solutions to scalar conservation laws 
utilizes the monotonicity property \cite{crandall1980monotone, sanders1983convergence}. We point out that  monotone approximations are limited to first-order accuracy \cite{harten1976finite}.  
On the other hand, a less restrictive class of scheme is obtained by requiring that the TVD property holds. 
We refer to Ol\u{e}inik \cite{oleinik1957discontinuous}, Vol'pert \cite{vol1967spaces}, Kru\u{z}kov \cite{kruvzkov1970first}, and Crandall \cite{crandall1972semigroup} for analysis of solutions with finite total variation and, more generally, solutions satisfying the $L^1$ contraction property. 
 In one dimension, the TVD property enables the construction of convergent difference schemes of high-order resolution, as was initiated by Harten in \cite{harten1983high} and subsequently discussed
by many other researchers.  Our current work is based on the class of monotone schemes (with first-order accuracy), which provides the simplest setup for the development of the new theory of nonlocal conservation laws we propose, but, clearly, higher-order TVD models could be defined similarly and would provide an interesting extension to the present study.

\subsection{Summary of this paper}

In summary, a class of nonlocal  conservation laws is proposed here as a generalization or relaxation
of the conventional local conservation law.  While many forms of nonlocal relaxations have been studied
before in the literature, our nonlocal conservation law has novel and attracting features
and, importantly, shares many important properties of the local conservation law, especially the conservation property and the maximum principle. Our nonlocal model also enjoys a generalization of the entropy condition, and hence provides physically sensible solutions. Moreover, 
the model reduces to its local counterpart  in the local limit.  By designing a monotone scheme adapted to the nonlocal model, we offer a constructive proof to the existence and uniqueness of entropy solutions.
Moreover,  the solution to a given Cauchy problem, as the horizon  $\delta$ is fixed and $\Delta x \rightarrow 0$,  converges to the entropy solution of the nonlocal conservation law, while as both $\delta$ and $\Delta x$ vanish, converges to  the entropy solution of the local conservation law. 
This leads to the so-called asymptotic compatibility, as defined in \cite{td14}, of the discrete schemes for
the nonlocal model. Further investigations, such as the shock formation in the nonlocal conservation models, as well as numerical simulations could be carried out in order to further explore the properties of the solutions. We refer to the companion work \cite{BDL} for variants of \eqref{eq:Cauchy} and the discussion of related issues such as the existence and properties of traveling wave solutions, as well as the connections with other nonlocal models.


\section{The nonlocal pair-interaction model}

\subsection{Nonlocal interaction kernel}

We begin with some specific details about the nonlocal conservation law of interest.
First, the nonlocal interaction kernel
$\omega^{\delta}: \R\rightarrow[0,+\infty)$ is a nonnegative
density, supported in $(0,\delta)$: 
\begin{equation}\label{eq:w_density}
\omega^{\delta}\geq 0, \qquad \omega^{\delta} \mbox{ is supported on } [0,\delta],\quad\quad \int_{0}^{\delta}\omega^{\delta}(h)dh=1.
\end{equation}
Without essential restriction, we can choose
 $\omega^{\delta}(h)=\frac{1}{\delta}\rho\left(\frac{h}{\delta}\right)$, where $\rho$ is a non-negative density function supported on
$[0,1]$, and we assume $\omega^{\delta}\in C^2(0,\delta)$.

 The flux $g=g(u_1,u_2)$ defined for $u_1, u_2 \in \R$ is assumed to satisfy the following
conditions:\\
 (i) $g$ is \emph{consistent
} with  a local flux $f$: 
\begin{equation}\label{eq:g_consistent}
 g(u,u)=f(u).
\end{equation} 
(ii) $g: W^{1,\infty}(\R)\times W^{1,\infty}(\R)\rightarrow
W^{1,\infty}(\R)$ and its partial derivatives, denoted as $g_1$ and $g_2$,  are Lipschitz continuous, with Lipschitz constant $C$: 
\begin{equation}\label{eq:g_Lip}
||g(a,b)-g(c,d)||_{\infty}+\sum_{i=1}^{2}||g_i(a,b)-g_i(c,d)||_{\infty}\leq
C(||a-c||_{\infty}+||b-d||_{\infty}).
\end{equation}
(iii) $g$ is 
nondecreasing with respect to the first argument, and nonincreasing
to the second argument:
\begin{equation}\label{eq:g_monotone}
 g_1(u_1,u_2):=\frac{\partial g}{\partial u_1}(u_1,u_2)\geq 0,\quad
g_2(u_1,u_2):=\frac{\partial g}{\partial u_2}(u_1,u_2)\leq 0.
\end{equation}
(iv) The partial derivatives $g_i$ are locally bounded functions in $L^\infty$, with 
 \begin{equation} \label{eq:gi_bded}
||g_i(a,b)||_{\infty}\leq C(||a||_{\infty}+||b||_{\infty}).
\quad\quad i=1,2,
\end{equation}
for some constant $C$. 

We define the operator $\mathcal{L}^\delta$  by
\begin{equation}\label{eq:L_operator}
\mathcal{L}^{\delta}(u)(x):=\int_{\R}\frac{g(u(x),u(x+h))-g(u(x-h),u(x))}{h}\omega^{\delta}(h)dh.
\end{equation}
 For convenience we sometimes omit the dependence in $\delta$,   and abbreviate $\mathcal{L}^{\delta}$ as $\mathcal{L}$,  $\omega^{\delta}$ as $\omega$, $u(x,t)$ as
$u(x)$, and  the
partial
derivatives of $g$ as  $g_1, g_1$.
Finally, we set $\Pi_{T}:={\R}\times[0,T]$ and $\Pi_{T}^2:=\Pi_{T} \times \Pi_{T}$.


\subsection{Nonlocal entropy inequality and entropy solutions}

Similarly to the local case, we may define a notion of entropy solution
for the nonlocal models. Moreover, it is shown that an entropy
inequality in the sense of Kru\v{z}kov leads to the uniqueness of the solution
in $L^{\infty}({\R}\times[0,T])\cap C(0,T; L^{1}({\R}))$.

\begin{definition} A function $u$ is an \emph{entropy solution}  of the nonlocal conservation law (\ref{eq:Cauchy}),
 if  $u\in L^{\infty}({\R}\times[0,T])\cap C(0,T; L^{1}({\R}))$ and it satisfies the following Kru\v{z}kov-type entropy inequality:
 \begin{equation}\label{eq:entropy_solution_nonlocal}
 \int_{0}^{T}\int_{{\R}}|u-c|\phi_tdxdt+\int_{0}^{T}\int_{{\R}}\int_{0}^{\delta}\frac{\tau_{h}\phi-\phi}{h}q(u,\tau_{h}u)\omega(h)dh dxdt\geq 0
 \end{equation}
 for every $\phi\in C^{1}_{0}({\R}\times[0,T])$ with $\phi\geq 0$ and any constant $c\in{\R}$. Here
 $\tau_h$ is the shift operator introduced before, $q$ is the nonlocal entropy flux corresponding to the entropy function $\eta(u,c)=|u-c|$, defined as
 \begin{equation}\label{eq:entropy_flux_nonlocal}
 q(a,b;c)=g(a\vee c,b\vee c)-g(a\wedge c,b\wedge c)
 \end{equation}
 or, equivalently, 
 \begin{eqnarray*}
 q(a,b;c)&=&sgn(b-a)\left( \frac{sgn(a-c)+sgn(b-c)}{2} (g(a,b)-g(c,c))  \right.\\
 &&\qquad\qquad
+\left.\frac{sgn(a-c)-sgn(b-c)}{2} (g(c,b)-g(a,c))
\right),
 \end{eqnarray*}
 where we set $sgn(0)=1$. In particular, by the consistency of $g$ in (\ref{eq:g_consistent}), one has 
 \begin{equation*}
 q(u,u;c)=g(u\vee c, u\vee c)-g(u\wedge c, u\wedge c)=f(u\vee c)-f(u\wedge c)=sgn(u-c)[f(u)-f(c)],
\end{equation*}
 which is consistent with the local entropy flux $\mathrm{q}(u,c)=sgn(u-c)(f(u)-f(c))$.
 The inequality (\ref{eq:entropy_solution_nonlocal}) is referred to as the \emph{entropy inequality} for the nonlocal conservation law (\ref{eq:Cauchy}). 
\end{definition}

It is easy to check the following result. 

\begin{lemma} {\label{lemma_q-q}}
Let $q$ be defined in (\ref{eq:entropy_flux_nonlocal}), with $g$ satisfying (\ref{eq:g_Lip}). Then there holds:\\
(i)\quad the Lipschitz continuity of $q$: 
\begin{eqnarray} \label{eq:q_Lip_continuity}
|q(a,b)-q(c,d)|\leq C(|a-c|+|b-d|).
\end{eqnarray}
(ii) \quad  the boundedness of $q$: 
\begin{eqnarray} \label{eq:q_boundedness}
|q(a,b;c)|\leq C\left( |a|+|b|+|c|\right),
\end{eqnarray}
\end{lemma}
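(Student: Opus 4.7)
The plan is to read off both properties directly from the max/min representation
$$q(a,b;c) = g(a\vee c, b\vee c) - g(a\wedge c, b\wedge c),$$
together with the global Lipschitz hypothesis \eqref{eq:g_Lip} on $g$ and the elementary fact that the maps $x\mapsto x\vee c$ and $x\mapsto x\wedge c$ are $1$-Lipschitz on $\R$ for any fixed $c$.

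For part (i), with $c\in\R$ fixed I would write
$$q(a,b;c)-q(a',b';c) = \bigl[g(a\vee c,b\vee c)-g(a'\vee c,b'\vee c)\bigr] - \bigl[g(a\wedge c,b\wedge c)-g(a'\wedge c,b'\wedge c)\bigr],$$
apply \eqref{eq:g_Lip} to each bracket to bound the whole expression by
$$C\bigl(|a\vee c-a'\vee c|+|b\vee c-b'\vee c|+|a\wedge c-a'\wedge c|+|b\wedge c-b'\wedge c|\bigr),$$
and then invoke the $1$-Lipschitz property of the max and min to reduce each term to $|a-a'|$ or $|b-b'|$. This immediately yields \eqref{eq:q_Lip_continuity} (with constant $2C$, which I would simply reabsorb into $C$).

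For part (ii), the same Lipschitz bound \eqref{eq:g_Lip} applied once to the single difference $g(a\vee c,b\vee c)-g(a\wedge c,b\wedge c)$ gives
$$|q(a,b;c)|\leq C\bigl(|a\vee c-a\wedge c|+|b\vee c-b\wedge c|\bigr)=C(|a-c|+|b-c|),$$
and the triangle inequality then produces the claimed bound $C(|a|+|b|+|c|)$ (after enlarging the constant). I am using the elementary identity $x\vee c - x\wedge c = |x-c|$, which is the one small fact that drives the boundedness part.

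There is no real obstacle here; the only thing to be careful about is which form of Lipschitz continuity of $g$ is being invoked. The hypothesis \eqref{eq:g_Lip} is phrased in terms of $L^\infty$ norms of $W^{1,\infty}$ functions, but by specializing to constant functions it yields precisely the pointwise Lipschitz estimate $|g(a,b)-g(a',b')|\leq C(|a-a'|+|b-b'|)$ used above. Once this is noted, both (i) and (ii) are essentially a one-line computation each.
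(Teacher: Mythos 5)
Your proof is correct: both parts follow exactly as you describe from the representation $q(a,b;c)=g(a\vee c,b\vee c)-g(a\wedge c,b\wedge c)$, the pointwise Lipschitz estimate obtained from (\ref{eq:g_Lip}) by specializing to constants, the $1$-Lipschitz property of $x\mapsto x\vee c$ and $x\mapsto x\wedge c$, and the identity $x\vee c-x\wedge c=|x-c|$. The paper itself offers no proof (it states only that the result ``is easy to check''), and your argument is precisely the standard verification the authors had in mind.
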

 

\subsection{The uniqueness theory}

While the existence of solutions will be established in the next section by introducing a discrete scheme, we can already address the question of the uniqueness of solutions. 

\begin{theorem}{\bf (Uniqueness theory for the nonlocal model)}\label{thm_uniqueness}
 Let $g$ satisfy (\ref{eq:g_Lip}), and  $u$, $v$  be two entropy solutions of the nonlocal conservation law (\ref{eq:Cauchy}) with initial data $u_0$, $v_0\in L^{1}({\R})\cap L^{\infty}({\R})$, respectively. Then, the following contraction property holds for all $t\in(0,T)$,
\begin{eqnarray}\label{eq:L1_constraction}
||u(\cdot,t)-v(\cdot, t)||_{L^{1}({\R})}\leq||u_0-v_0||_{L^1({\R})}. 
\end{eqnarray}
In particular, this implies that $v=u$ almost everywhere in $\Pi_{T}$ whenever the initial data coincide: $u_0=v_0$. 
\end{theorem}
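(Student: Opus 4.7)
The strategy is to adapt Kru\v{z}kov's doubling-of-variables method to the nonlocal framework. Introducing independent variables $(x,t),(y,s)\in\Pi_T$, I apply the entropy inequality \eqref{eq:entropy_solution_nonlocal} twice: once for $u(x,t)$ with constant $c=v(y,s)$, and once for $v(y,s)$ with constant $c=u(x,t)$, tested against a common nonnegative function $\phi\in C_0^1(\Pi_T^2)$. Integrating each over the remaining two variables and summing yields
\begin{equation*}
\int_{\Pi_T}\!\int_{\Pi_T}|u-v|\,(\phi_t+\phi_s)\,dx\,dt\,dy\,ds + \mathcal{N}^x[\phi] + \mathcal{N}^y[\phi] \geq 0,
\end{equation*}
where $\mathcal{N}^x[\phi]$ and $\mathcal{N}^y[\phi]$ collect the nonlocal entropy-flux contributions with the shift $\tau_h$ acting on $x$ and on $y$, respectively. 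I then take the standard Kru\v{z}kov test function $\phi(x,t,y,s)=\psi\!\bigl(\tfrac{x+y}{2},\tfrac{t+s}{2}\bigr)\rho_\varepsilon(x-y)\rho_\varepsilon(t-s)$ for $\psi\in C_0^1(\R\times(0,T))$ nonnegative and $\rho_\varepsilon$ a standard mollifier, and pass to the limit $\varepsilon\to 0$. By Lebesgue differentiation the temporal term converges to $\int_0^T\!\int_\R|u-v|(x,t)\psi_t(x,t)\,dx\,dt$.

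For the nonlocal flux terms I use the following algebraic identity, which follows by case analysis from \eqref{eq:entropy_flux_nonlocal} and the monotonicity \eqref{eq:g_monotone} of $g$: on the diagonal $x=y$, $t=s$, the sum $q(u,\tau_h u;v)+q(v,\tau_h v;u)$ equals an effective nonlocal entropy flux $\widetilde{Q}(u,\tau_h u,v,\tau_h v)$ for $|u-v|$ that is Lipschitz (by Lemma~\ref{lemma_q-q}) and reduces to $\operatorname{sgn}(u-v)\bigl[f(u)-f(v)\bigr]$ in the limit $h\to 0$. After the $\varepsilon\to 0$ passage, the combined nonlocal flux contribution becomes
\begin{equation*}
\int_0^T\!\int_\R\!\int_0^\delta \frac{\psi(x+h,t)-\psi(x,t)}{h}\,\widetilde{Q}\bigl(u(x,t),u(x+h,t),v(x,t),v(x+h,t)\bigr)\,\omega(h)\,dh\,dx\,dt.
\end{equation*}
The main obstacle here is the noncommutativity of the shift $\tau_h$ (which acts on only one of the two spatial variables) with the mollifier $\rho_\varepsilon(x-y)$; I control the resulting off-diagonal remainder by splitting each shifted factor as $\rho_\varepsilon(x-y\pm h)=\rho_\varepsilon(x-y)+[\rho_\varepsilon(x-y\pm h)-\rho_\varepsilon(x-y)]$ and using the Lipschitz estimate \eqref{eq:q_Lip_continuity}, the uniform $L^\infty$ bound on $u,v$, and the $C(0,T;L^1(\R))$ continuity in time to kill the remainder as $\varepsilon\to 0$.

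Finally, I specialize $\psi(x,t)=\alpha(t)\chi_R(x)$, where $\chi_R$ is a smooth spatial cutoff equal to $1$ on $[-R,R]$ and $\alpha$ approximates $\chi_{[t_1,t_2]}$. Letting $R\to\infty$ so that $\psi$ becomes spatially constant on any bounded set, the nonlocal divergence contribution vanishes thanks to the integrability of $\widetilde{Q}$ (inherited from $u,v\in L^1$) and a telescoping argument in the shift variable $h$. What remains reads $\int_\R|u-v|(x,t_2)\,dx\leq\int_\R|u-v|(x,t_1)\,dx$, and sending $t_1\to 0^+$ with $u,v\in C(0,T;L^1(\R))$ recovers the initial data and yields \eqref{eq:L1_constraction}; the final a.e.\ uniqueness assertion is then immediate when $u_0=v_0$.
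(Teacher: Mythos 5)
Your proposal follows essentially the same route as the paper's proof: Kru\v{z}kov doubling of variables with the mollified test function $\phi\bigl(\tfrac{x+y}{2},\tfrac{t+s}{2}\bigr)\xi_\rho\bigl(\tfrac{x-y}{2}\bigr)\xi_\rho\bigl(\tfrac{t-s}{2}\bigr)$, Lebesgue differentiation for the temporal term, and elimination of the nonlocal flux contribution by letting the spatial test function tend to a constant and invoking the bound $|q(a,b;c)|\le C(|a|+|b|+|c|)$ together with $u,v\in L^{1}(\Pi_T)$ --- exactly the mechanism the paper implements with the cutoffs $\varphi_n\to 1$. The only differences are cosmetic: you package the diagonal sum of entropy fluxes as an ``effective flux'' $\widetilde{Q}$ and treat the shift/mollifier interaction by a splitting rather than the paper's change of variables, but neither the precise form of that diagonal limit nor your remainder estimate is actually needed, since the final spatially-constant test function annihilates any limit of that integrable form.
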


\begin{proof} Recall that $\Pi_T^2=\Pi_T\times\Pi_T$.
 For a nonnegative $\psi=\psi(x,t,y,s)\in C^{\infty}(\Pi_T^2)$ satisfying
$\psi(x,t,y,s)=\psi(y,t,x,s)$, $\psi(x+h,t,y,s)=\psi(x,t,y+h,s)$ and
$ \psi(x,t,y,s)=\psi(x,s,y,t)$,
we define  $$D_x^h \psi(x,t,y,s)= \frac{\psi(x+h,t,y,s)-\psi(x,t,y,s)}{h}$$
and
$$D_y^h \psi(x,t,y,s)= \frac{\psi(x,t,y+h,s)-\psi(x,t,y,s)}{h}.$$

(i) Let $u=u(x,t)$ and $v=v(y,s)$ be  two entropy solutions of the nonlocal conservation law  (\ref{eq:Cauchy}). 
 We consider the entropy  function   $\eta(u,c)=|u-c|$,  and entropy flux $q=q(a,b;c)$  defined  in (\ref{eq:entropy_flux_nonlocal}). 
 
First we take $c=v(y,s)$ in the nonlocal entropy inequality  (\ref{eq:entropy_solution_nonlocal}) for $u$, and integrate over $(y,s)\in\Pi_{T}$:
\begin{eqnarray*}
&&\int_{\Pi_{T}^2}\eta(u(x,t),v(y,s))\partial_t \psi(x,t,y,s)dw\nonumber \\
&&\quad+\int_{\Pi_{T}^2}\int_{0}^{\delta} D_x^h \psi(x,t,y,s)
q(u(x,t),u(x+h,t);v(y,s))\omega(h)dh dw\geq 0,
\end{eqnarray*}
where $dw=dxdtdyds$. Similarly, we take $c=u(x,t)$ in the nonlocal entropy inequality for $v$, integrate over $(x,t)\in\Pi_{T}$, and using the symmetry of $\psi$, we have
\begin{eqnarray*}
&&\int_{\Pi_{T}^2}\eta(v(y,s),u(x,t))\partial_s \psi(x,t,y,s)dw\nonumber \\
&&\quad+\int_{\Pi_{T}^2}\int_{0}^{\delta}  D_y^h \psi(x,t,y,s) q(v(y,s),v(y+h,s);u(x,t))\omega(h)dh dw\geq 0.
\end{eqnarray*}
Adding these two inequalities, and using the properties of $\psi$, we have
\begin{eqnarray}\label{eq:add_entropy_inequalties}
&&\int_{\Pi_{T}^2}\eta(u(x,t),v(y,s))(\partial_t+\partial_s) \psi(x,t,y,s)dw\nonumber \\
&&\quad+\int_{\Pi_{T}^2}\int_{0}^{\delta} D_x^h \psi(x,t,y,s) \left[q\big(u(x,t),u(x+h,t);v(y,s)\big)+\right.
 \nonumber\\
&&\quad\quad\quad\quad \left. +q\big(v(y,s),v(y+h,s);u(x,t)\big)  \right] \omega(h)dh dw\geq 0.\nonumber\\
&&
\end{eqnarray}
Take
\begin{equation*}
\psi(x,t,y,s)=\xi_{\rho}\left(\frac{x-y}{2}\right)\xi_{\rho}\left(\frac{t-s}{2}\right)\phi\left(\frac{x+y}{2},\frac{t+s}{2}\right),
\end{equation*}
where  $\phi=\phi(x,t)\in C^{\infty}_{0}(\Pi_{T}^2 )$ is a non-negative test  function, 
$\xi_{\rho}$ is a scaling of $\xi$:
\begin{equation*}
\xi_{\rho}(x)=\frac{1}{\rho}\xi\left(\frac{x}{\rho}\right),\quad\quad \rho>0,\end{equation*}
and $\xi\in C^{\infty}_{0}(\mathbb{R})$ is a non-negative function satisfying 
\begin{equation*}
\xi(x)=\xi(-x),\quad \xi(x)=0 \mbox{ for } |x|\geq 1, \quad \mbox{ } \int_{\mathbb{R}}\xi(x)dx=1.
\end{equation*}

As $\rho\rightarrow 0$, for the first term in (\ref{eq:add_entropy_inequalties}),
\begin{eqnarray*}
&&\lim_{\rho\rightarrow 0}\int_{\Pi_{T}^2}\eta(u(x,t),v(y,s))(\partial_t+\partial_s) \psi(x,t,y,s)dw =\int_{\Pi_{T}}\eta(u(x,t),v(x,t))\partial_t \phi(x,t)dxdt, 
\end{eqnarray*}
the proof of which can be found in, e.g, proof of Theorem 1 in \cite{karlsen2003uniqueness}. The second term in (\ref{eq:add_entropy_inequalties}) goes to zero:
\begin{eqnarray}\label{eq:flux_term_vanish}
&&\lim_{\rho\rightarrow 0}\int_{\Pi_{T}^2}\int_{0}^{\delta}\frac{\psi(x+h,t,y,s)-\psi(x,t,y,s)}{h}
\Bigg(
q\big(u(x,t),u(x+h,t);v(y,s)\big)
\nonumber\\
&&\quad\quad\quad\quad +q\big(v(y,s),v(y+h,s);u(x,t)\big)  \Bigg) \omega(h)dh dw= 0,\nonumber\\
&&
\end{eqnarray}
 which will be shown later in this proof. 
  Hence sending $\rho\rightarrow 0$,  (\ref{eq:add_entropy_inequalties})  becomes
 \begin{eqnarray*}
\int_{\Pi_{T}}\eta(u(x,t),v(x,t))\partial_t \phi(x,t)dxdt\geq 0, 
\end{eqnarray*}
and this inequality implies the $L^1$-contraction property (\ref{eq:L1_constraction}) (see \cite{karlsen2003uniqueness}). \\
 
 (ii) It now remains to show (\ref{eq:flux_term_vanish}). For the second term in (\ref{eq:add_entropy_inequalties}), 
 we introduce the change of variable
 \begin{eqnarray*}
&&\tilde{x}=\frac{x+y}{2}, \quad z=\frac{x-y}{2}, \quad \tilde{t}=\frac{t+s}{2}, \quad  \tau=\frac{t-s}{2},
\end{eqnarray*}
thus $x=\tilde{x}+z$, $y=\tilde{x}-z$, $t=\tilde{t}+\tau$, and $s=\tilde{t}-\tau$, and 
and use Lebesgue's differentiation  theorem, it becomes
$$
\aligned 
&\lim_{\rho\rightarrow 0}\int_{(\Pi_{T}^2)}\int_{0}^{\delta}\frac{1}{h}
\Bigg(
\xi_{\rho}\left(z+\frac{h}{2}\right)\xi_{\rho}(\tau)\phi\left(\tilde{x}+\frac{h}{2},\tilde{t}\right)-\xi_{\rho}\left(z\right)\xi_{\rho}(\tau)\phi\left(\tilde{x},\tilde{t}\right) \Bigg) 
\\
& \hskip3.cm \Big( 
q\big(u(\tilde{x}+z,\tilde{t}+\tau),u(\tilde{x}+z+h,\tilde{t}+\tau);v(\tilde{x}-z,\tilde{t}-\tau)\big) 
\\
& \quad\quad
 +q\big(v(\tilde{x}-z,\tilde{t}-\tau),v(\tilde{x}-z+h,\tilde{t}-\tau);u(\tilde{x}+z,\tilde{t}+\tau)\big)  \Big)
 \omega(h)dh dw  
\\
&=\int_{\Pi_{T}}\int_{0}^{\delta}\frac{\phi\left(\tilde{x}+\frac{h}{2},\tilde{t}\right)-\phi\left(\tilde{x},\tilde{t}\right)}{h} 
\left[q\big(u(\tilde{x},\tilde{t}),u(\tilde{x}+h,\tilde{t});v(\tilde{x},\tilde{t})\big) \right. 
\\
&\quad\quad\quad\quad  \left. +q\big(v(\tilde{x},\tilde{t}),v(\tilde{x},\tilde{t});u(\tilde{x},\tilde{t})\big) \right] \omega(h)dh d\tilde{x}d\tilde{t}. \quad\quad\quad \label{eq:2nd_term}
\endaligned
$$
Taking $\phi$ as $\phi(x,t)=\chi(t)\varphi_n(x)$ for some $\chi\in C^{\infty}_{0}(0,T)$ and 
\begin{equation*}
 \varphi_n(x)=\int_{\mathbb{R}}\xi(x-y)\mathbf{1}_{|y|<n}dy,\quad\quad n\in(1,\infty), 
\end{equation*}
then each $\varphi_n$ is in $C_{c}^{\infty}(\mathbb{R})$, and  vanishes on $\{x\in\mathbb{R}:||x|-n|>1+\frac{\delta}{2}\}$.  Moreover, we notice that $\lim_{n\rightarrow \infty}\varphi_{n}(x)=\varphi_{\infty}(x)\equiv 1$ and 
\begin{eqnarray}\label{eq:uniform_bded_phi}
\sup_{n}\sup_{x\in\mathbb{R},h\in(0,\delta]} \left| \frac{\varphi_{n}(x+h)-\varphi_{n}(x)}{h}\right| < +\infty.
\end{eqnarray}
Letting $n\rightarrow \infty$ in (\ref{eq:2nd_term}),  by (ii) of  Lemma {\ref{lemma_q-q}}, 
(\ref{eq:uniform_bded_phi}) and (\ref{eq:w_density}) it becomes
\begin{eqnarray*}
\Omega := &&\lim_{n\rightarrow \infty}\left|  \int_{\Pi_{T}}\int_{0}^{\delta}\chi(t)\frac{\varphi_n\left(x+\frac{h}{2}\right)-\varphi_n\left(x\right)}{h} \right.\\
&&\quad\quad\quad\left.  \left[q\big(u(x,t),u(x+h,t);v(x,t)\big)+q\big(v(x,t),v(x,t);u(x,t)\big) \right] \omega(h)dh dxdt        \right|\\
&&\quad \leq C||\chi||_{\infty}\lim_{n\rightarrow \infty} \int_{\Pi_{T}}\int_{0}^{\delta}\big(|u(x,t)|+|v(x,t)|+|u(x+h,t)|+|v(x+h,t)|\big)\\
&&\qquad\qquad\qquad \qquad
\mathbf{1}_{||x|-n|<1+\frac{\delta}{2}}\omega(h)dhdxdt, 
\end{eqnarray*}
thus
\begin{eqnarray*}
\Omega
&&\quad =  C||\chi||_{\infty} \lim_{n\rightarrow \infty} \left\{ \int_{\Pi_{T}}\big(|u(x,t)|+|v(x,t)|\big)
\mathbf{1}_{||x|-n|<1+\frac{3\delta}{2}}dxdt\right.\\
&&\qquad+\left.
 \int_{\Pi_{T}}\int_{0}^{\delta}\big(|u(x+h,t)|+|v(x+h,t)|\big)
\mathbf{1}_{||x|-n|<1+\frac{3\delta}{2}}\omega(h) dh dxdt\right\}\\
&& \quad \leq C||\chi||_{\infty}\lim_{n\rightarrow \infty} \int_{\Pi_{T}}\big(|u(x,t)|+|v(x,t)|\big)
\mathbf{1}_{||x|-n|<1+\frac{5\delta}{2}}dxdt\,=0,\quad\quad
\end{eqnarray*}
by the dominated convergence theorem since $u$ and $v$ belong to $L^{1}(\Pi_T)$.  Thus (\ref{eq:flux_term_vanish})
is established, and the proof is completed.
\end{proof}


\section{A numerical scheme for the nonlocal model}

\subsection{Discretization of the integral term}

We now propose a monotone scheme (\ref{eq:nonlocal_ForwardInTime}) adapted to our nonlocal
conservation law (\ref{eq:Cauchy}).  (Numerical experiments will be reported in \cite{dh16}.) 
Here, we focus on the convergence theory with the aim of establishing the well-posedness of
the nonlocal continuum models and their local limit.

Denote $\Delta x$ and $\Delta t$ as the spatial and time grid-size, $\mathcal{I}_j=\left[\left(j-\frac{1}{2}\right)\Delta x,
\left(j+\frac{1}{2}\right)\Delta x\right)$ and $\mathcal{I}^n=[n\Delta t, (n+1)\Delta t)$ as the spacial and time cells, and 
grid points  $x^n$, $t^n$ as the mid-point  of $\mathcal{I}_j$ and $\mathcal{I}^n$.  Denote $u^n_j$ as the numerical solution at grid point $(x_j, t^n)$.

Following \cite{td13}, at $(x_j,t^n)$, 
we adopt the approximation 
 given  by
\begin{equation}\label{eq:discretizeIntegral}
\int_{0}^{\delta}\frac{g(u(x),u(x+h))-g(u(x-h),u(x))}{h}\omega^{\delta}(h)dh\sim
\sum_{k=1}^{r\vee 1}\left[g_{j,j+k}-g_{j-k,j}\right]W_k, 
\end{equation}
where $r=\left\lfloor\frac{\delta}{\Delta x}\right\rfloor$, $r\vee 1=\max\{r, 1\}$ and 
\begin{eqnarray}\label{eq:Wk}
W_k=\frac{1}{k\Delta x}\int^{k \Delta x}_{(k-1)\Delta x}\omega_\delta(h)dh+\frac{\mathbf{1}_{k=r}}{(r\vee 1) \Delta x}\int_{r\Delta x}^{\delta}\omega_\delta(h)dh
\end{eqnarray}
with $\mathbf{1}_{k=r}$ being the Kronecker-delta function ($1$ for $k=r$ and $0$ otherwise).
In (\ref{eq:Wk}), each $W_k$ depends on both $\Delta x$ and $\delta$.
Since $\omega_\delta$ satisfies (\ref{eq:w_density}), $W_k$ defined above satisfies
\begin{equation}\label{eq:Wk_Sum1}
\Delta x\sum_{k=1}^{r\vee 1} kW_k=1\quad\mbox{ for any pair }\; (\Delta x,\delta).
\end{equation}


\subsection{Numerical scheme}
We consider the following forward-in-time conservative scheme for the nonlocal problem (\ref{eq:Cauchy}):
\begin{eqnarray} \label{eq:nonlocal_ForwardInTime}
\left\{\begin{array}{l l}
& \displaystyle \frac{u^{n+1}_{j}-u^{n}_{j}}{\Delta
t}+\sum_{k=1}^{r\vee 1}\left[g_{j,j+k}-g_{j-k,j}\right]W_k=0,\\[5mm]
& \displaystyle  u^0_j=\frac{1}{\Delta x}\int_{\mathcal{I}_{j}}u_0(x)dx,
\end{array}
\right.
\end{eqnarray}
where $\mathcal{I}_{j}:=[x_{j-\frac{1}{2}},x_{j+\frac{1}{2}})$, and $W_k$ is defined in (\ref{eq:Wk}).
The first equation in (\ref{eq:nonlocal_ForwardInTime}) can be expressed as
\begin{eqnarray}\label{eq:nonlocalschemeH}
&&u^{n+1}_{j}=H(u^n_{j-r},\ldots,u^n_j,\ldots,u^n_{j+r})
\end{eqnarray}
with $H$  defined as
\begin{eqnarray}\label{eq:H}\;\;
H(u_{j-r},\ldots,u_j,\ldots,u_{j+r})=u_{j}-\Delta t\left[
\sum_{k=1}^{r\vee 1}
W_k g_{j,j+k}-  \sum_{k=1}^{r\vee 1}
W_k g_{j-k,j}\right].
\end{eqnarray}
In the following, we will refer scheme (\ref{eq:nonlocal_ForwardInTime}) or  (\ref{eq:nonlocalschemeH}) 
as the {\em nonlocal scheme} for convenience.

If we
fix a spacial grid size $\Delta x$ with $\delta<\Delta x$ and let
$\delta\rightarrow 0$,  the first equation in scheme  (\ref{eq:nonlocal_ForwardInTime}) reduces
to
\begin{equation}
u^{n+1}_{j}=u_j^{n}-\frac{\Delta t}{\Delta
x}\left[g(u_j,u_{j+1})-g(u_{j-1},u_{j})\right],
\end{equation}
which recovers a standard  finite difference scheme for the local
conservation law (\ref{eq:local_ConservationLaw}), where $g$ serves
as the {\em numerical flux} function. In the local case, different choices of numerical flux
lead to different schemes, such as standard Godunove scheme,
linearized Riemann solvers such as  Murmann-Roe scheme, central
schemes such as the Lax-Friedrichs scheme, Rusanov scheme and
Engquist-Osher scheme, etc. It is expected that, by taking $g$ as such
numerical fluxes, we get corresponding nonlocal versions of these
local schemes.


\subsection{Properties of the discrete scheme}
It it important to note in the scheme (\ref{eq:nonlocalschemeH}) the
monotonicity of $H$ which means that  $H$ is nondecreasing with respect to each of its arguments:
\begin{equation}\label{eq:H_monotone}
\frac{\partial H(u^n_{j-r},\ldots,u^n_{j+r})}{\partial
u^n_{i}}\geq 0
\qquad  \text{ for all }  i,j, \mbox{ and } u^n=(\cdots, u^n_{j-1}, u^n_j, u^n_{j+1}, \cdots ).
\end{equation}
In the next lemma we show that our scheme (\ref{eq:nonlocalschemeH}) is monotone under appropriate assumptions on $g$ and a nonlocal CFL condition. \\

\begin{lemma} \label{lemmaMonotone} 
Scheme (\ref{eq:nonlocalschemeH}) satisfies the following:\\
(i) it is  conservative and consistent.\\
(ii) it is monotone if we assume that 
 $g=g(a,b)$ satisfies (\ref{eq:g_consistent}), (\ref{eq:g_monotone}),  (\ref{eq:gi_bded}) on $[B_1,B_2]\times[B_1,B_2]$, where  $B_1=\min_{j}\{u^0_j\}$, $B_2=\max_{j}\{u^0_j\}$, and
the following CFL  condition holds:
\begin{equation}\label{eq:CFL}
\left(\frac{\Delta t}{\Delta
  x}\right)
 \left(\sup_{B_1\leq a,b\leq B_2}|g_1(a,b)|+\sup_{B_1\leq a, b\leq B_2}|g_2(a,b)|\right)\leq 1;
\end{equation}
(iii) 
it has the discrete Maximum Principle under the CFL-condition, (\ref{eq:CFL}):
\begin{equation}\label{eq:MP_discrete}
B_1\leq u^{n}_{j}\leq B_2,\quad\quad \text{ for all } n, j.
\end{equation}
(iv) it satisfies the BV estimate:
\begin{equation}\label{eq:dis-BV}
\sum_i |u^{n}_i - u^{n}_{i-1}| \leq   \sum_i |u^{0}_i - u^{0}_{i-1}|. 
\end{equation}
\end{lemma}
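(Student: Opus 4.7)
The plan is to handle the four items sequentially; item (ii) does the main work, and (iii)--(iv) then follow from monotonicity by classical arguments. For (i), conservativity is verified by summing $u^{n+1}_j - u^n_j = -\Delta t\sum_{k=1}^{r\vee 1} W_k(g_{j,j+k} - g_{j-k,j})$ over $j$ and re-indexing the second inner sum by $j\mapsto j+k$: the double sum vanishes, so $\sum_j u^{n+1}_j = \sum_j u^n_j$. Consistency is the observation that a constant input is preserved, since plugging $u^n_j \equiv c$ into $H$ and invoking (\ref{eq:g_consistent}) makes every bracket collapse to $g(c,c) - g(c,c) = 0$.

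For (ii), I will differentiate $H$ term by term. The off-diagonal partials
\[
\frac{\partial H}{\partial u_{j+k}} = -\Delta t\, W_k\, g_2(u_j,u_{j+k}), \qquad \frac{\partial H}{\partial u_{j-k}} = \Delta t\, W_k\, g_1(u_{j-k},u_j),
\]
are nonnegative by (\ref{eq:g_monotone}) alone; only the diagonal partial
\[
\frac{\partial H}{\partial u_j} = 1 - \Delta t \sum_{k=1}^{r\vee 1} W_k\bigl[g_1(u_j,u_{j+k}) - g_2(u_{j-k},u_j)\bigr]
\]
requires the CFL bound. The key observation is that since $W_k\geq 0$ and $k\geq 1$, the normalization (\ref{eq:Wk_Sum1}) gives
\[
\sum_{k=1}^{r\vee 1} W_k \;\leq\; \sum_{k=1}^{r\vee 1} k\, W_k \;=\; \frac{1}{\Delta x};
\]
combined with $g_1 - g_2 \leq \sup|g_1| + \sup|g_2|$ on $[B_1,B_2]^2$, assumption (\ref{eq:CFL}) delivers $\partial H/\partial u_j \geq 0$.

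Items (iii) and (iv) follow from monotonicity by standard arguments. For (iii), I induct on $n$: the constants $B_1,B_2$ are fixed points of $H$ by consistency, so monotonicity gives $B_1 = H(B_1,\ldots,B_1) \leq u^{n+1}_j \leq H(B_2,\ldots,B_2) = B_2$ whenever the inductive hypothesis $u^n_j\in[B_1,B_2]$ holds (which simultaneously keeps us inside the range in which (\ref{eq:CFL}) is assumed). For (iv), the one-step map $T:u^n\mapsto u^{n+1}$ is monotone by (ii), translation-invariant by inspection, and mass-preserving by (i); the Crandall--Tartar lemma in its discrete form then yields the $\ell^1$-contraction $\sum_j|T(u)_j - T(v)_j| \leq \sum_j|u_j - v_j|$, and applying this with $v$ the unit left-shift of $u$ (so that $T(v)_j = T(u)_{j-1}$ by translation invariance) produces the TVD inequality, which iterates to (\ref{eq:dis-BV}). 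The only nonroutine step in the whole argument is the weight estimate $\sum_k W_k\leq 1/\Delta x$ in (ii): this is what lets the local-looking CFL (\ref{eq:CFL}), which involves no knowledge of $\omega^\delta$ at all, suffice for monotonicity of the genuinely nonlocal scheme.
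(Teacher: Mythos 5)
Your proposal is correct and follows essentially the same route as the paper: nonnegativity of the off-diagonal partials of $H$ from (\ref{eq:g_monotone}), the diagonal partial controlled via the weight bound $\sum_k W_k\leq\sum_k kW_k=1/\Delta x$ from (\ref{eq:Wk_Sum1}) together with the CFL condition, and then (iii)--(iv) deduced from monotonicity exactly as in the paper (which leaves the Crandall--Tartar step implicit where you spell it out).
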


\begin{proof}
(i)\quad  It is straightforward from our definition.\\
(ii)\quad It suffices to show that: $H$ defined in (\ref{eq:H}) is monotone, i.e,  (\ref{eq:H_monotone}) holds.
Since $g$ is monotone  (\ref{eq:g_monotone}),
it is straightforward that for any $k=1,2,\ldots,r$,
\begin{eqnarray*}
\frac{\partial H}{\partial u^n_{j-k}}\geq 0,\quad\quad
\frac{\partial H}{\partial u^n_{j+k}}\geq 0.
\end{eqnarray*}
Moreover, under the assumption  that $B_1\leq u^n_j\leq B_2$ for any $n$ or $j$,
\begin{eqnarray*}\label{eq:H_Uj}
\frac{\partial H}{\partial u^n_{j}}&=&1-(\Delta t
)\sum_{k=1}^{r}[g_1(u_{j},u_{j+k})-g_2(u_{j-k},u_j)]W_k\\
&\geq&1 - \frac{\Delta t}{\Delta x}  \left(\sup_{B_1\leq a,b\leq B_2}|g_1(a,b)|+\sup_{B_1\leq a, b\leq B_2}|g_2(a,b)|\right)
    \left(\Delta x \sum_{k=1}^{r}W_k\right)\\
    &\geq& 1 - \left(\frac{\Delta t}{\Delta
  x}\right)
 \left(\sup_{B_1\leq a,b\leq B_2}|g_1|+\sup_{B_1\leq a, b\leq B_2}|g_2|\right)\geq0.
\end{eqnarray*}
 The last two inequalities comes
 from (\ref{eq:Wk_Sum1}) and the CFL condition
 (\ref{eq:CFL}).   \\
 
The Maximum Principle is a direct consequence of the monotonicity of $H$  (\ref{eq:H_monotone}) and consistency of $g$ (\ref{eq:g_consistent}). Actually, 
 \begin{eqnarray*}
  u^{n+1}_{j}&=&H(u_{j-r},\ldots,u_{j+r})\leq H(\max_j\{u^n_{j}\},\ldots, \max_j\{u^n_{j}\})=\max_j\{u^n_{j}\},\\
    u^{n+1}_{j}&=&H(u_{j-r},\ldots,u_{j+r})\geq H(\min_j\{u^n_{j}\},\ldots, \min_j\{u^n_{j}\})= \min_j\{u^n_{j}\}.\\
   \end{eqnarray*}
That is (\ref{eq:MP_discrete}).
Finally, the estimate (\ref{eq:dis-BV}) is also a direct consequence of the monotonicity property. 
\end{proof}


\section{Convergence of the scheme  and existence theory for the nonlocal model}

\subsection{Total variation estimate}

We now investigate the convergence of  the  numerical
solutions given by the nonlocal scheme (\ref{eq:nonlocal_ForwardInTime}) in two kinds of limiting processes: \\
(1) when the horizon  $\delta$ is fixed and grid-size $\Delta x\rightarrow 0$; \\
(2) when $(\delta,\Delta x)\rightarrow (0,0)$. \\
Throughout, we assume that the ratio  $\frac{\Delta t}{\Delta x}$  is fixed (and satisfies suitable
 CFL condition) as we refine the
 discretization. Thus, as $\Delta x \rightarrow 0$, we have $\Delta t\rightarrow 0$ at the same rate.

 Let $u^n_j$ denote the numerical solution of (\ref{eq:nonlocal_ForwardInTime}),
 $\mathcal{I}_j=\left[\left(j-\frac{1}{2}\right)\Delta x,
\left(j+\frac{1}{2}\right)\Delta x\right)$  and $\mathcal{I}^n=[n\Delta t, (n+1)\Delta t)$. 
Define  piecewise constant function $u^{\Delta,\delta}$ using the grid  function $u^n_j$: 
\begin{equation}\label{eq:u_pw_constant}
u^{\Delta,\delta}(x,t)=\sum_{n=0}^{\infty}\sum_{j=-\infty}^{\infty}u^n_j\mathbf{1}_{\mathcal{I}_j\times\mathcal{I}^{n}}(x,t), 
\end{equation}
where for brevity  $\Delta$ is used to symbolize the dependence on $\Delta t$ and $\Delta x$ (as only
of them is free to change when their ratio is fixed), 
$\mathbf{1}_{\mathcal{I}_j\times\mathcal{I}^{n}}$ is the indicator function which takes value $1$ where $(x,t)\in \mathcal{I}_j\times\mathcal{I}^{n}$, and $0$ otherwise. 
Naturally, $u^{\Delta,\delta}$ depends on the grid size $\Delta x$, $\Delta t$, and the horizon parameter $\delta$. We sometimes write $u^{ \Delta,\delta}$ as $u^{\delta}$  to explicitly emphasize the dependence on $\delta$.

Suppose $\Delta x$ and $\Delta t$ satisfy the CFL condition (\ref{eq:CFL}), then by the discrete Maximum Principle (\ref{eq:MP_discrete}),  we have
\begin{equation}\label{eq:bounded_pw_u}
 ||u^{\Delta,\delta}||_{L^{\infty}(\R\times\R^{+})}\leq ||u_0||_{L^{\infty}(\R)}.
\end{equation}


\begin{lemma}  \label{lemma_BVbound_numericalSolution}
 For a given terminal time $T$,  we consider the nonlocal scheme
(\ref{eq:nonlocal_ForwardInTime}) on $[0,T]$. Assume $u_0\in L^{1}({\R})\cap L^{\infty}({\R})\cap C([0,T];BV(\R))$, $g$ satisfies (\ref{eq:g_consistent}, \ref{eq:g_Lip}, \ref{eq:g_monotone}),  and kernel 
$\omega^{\delta}$ satisfies condition  (\ref{eq:w_density}). 
Also assume that  $\frac{\Delta t}{\Delta x}$ is fixed, and satisfies the CFL condition
(\ref{eq:CFL}).
Then for any $\delta>0$, one has 
\begin{equation}\label{eq:BVbound_numericalSolution}
||u^{\Delta,\delta}(\cdot, t)||_{BV(\mathbf{R})}\leq ||u_0||_{BV(\mathbf{R})},\quad\quad\quad\quad \text{ for all } t\in [0,T].
\end{equation}
\end{lemma}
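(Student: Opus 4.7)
The plan is to reduce the continuous BV seminorm of the piecewise constant interpolant to the discrete BV quantity $\sum_j |u^n_{j+1} - u^n_j|$, invoke the discrete BV estimate already established in Lemma~\ref{lemmaMonotone}(iv), and then control the discretization of the initial datum by $|u_0|_{BV(\R)}$.

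First, I would fix $t\in[0,T]$ and let $n$ be the unique integer with $t\in\mathcal{I}^n$. Since $u^{\Delta,\delta}(\cdot,t)$ takes the constant value $u^n_j$ on each cell $\mathcal{I}_j$, its total variation is simply
\begin{equation*}
|u^{\Delta,\delta}(\cdot,t)|_{BV(\R)} = \sum_{j} |u^n_{j+1} - u^n_j|.
\end{equation*}
This identity uses nothing more than the definition of the BV seminorm for piecewise constant functions and the fact that the jump of $u^{\Delta,\delta}(\cdot,t)$ at $x_{j+1/2}$ equals $u^n_{j+1} - u^n_j$. By Lemma~\ref{lemmaMonotone}(iv), which is guaranteed by the CFL condition (\ref{eq:CFL}) and the monotonicity of $H$, we obtain
\begin{equation*}
\sum_j |u^n_{j+1} - u^n_j| \leq \sum_j |u^0_{j+1} - u^0_j|.
\end{equation*}

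Second, I would bound the right-hand side by $|u_0|_{BV(\R)}$. Using the cell-average definition $u^0_j = \frac{1}{\Delta x}\int_{\mathcal{I}_j} u_0(x)\,dx$, a change of variables gives
\begin{equation*}
|u^0_{j+1} - u^0_j| = \left|\frac{1}{\Delta x}\int_{\mathcal{I}_j}\bigl(u_0(x+\Delta x) - u_0(x)\bigr)\,dx\right| \leq \frac{1}{\Delta x}\int_{\mathcal{I}_j}|u_0(x+\Delta x) - u_0(x)|\,dx.
\end{equation*}
Summing over $j$ and applying the standard translation estimate $\int_{\R} |u_0(x+h) - u_0(x)|\,dx \leq |h|\,|u_0|_{BV(\R)}$ for $u_0\in BV(\R)$ with $h=\Delta x$ yields $\sum_j |u^0_{j+1} - u^0_j| \leq |u_0|_{BV(\R)}$. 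Concatenating the three inequalities gives the claim.

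The only non-cosmetic step is the BV estimate (\ref{eq:dis-BV}) itself, but this is already stated as part of Lemma~\ref{lemmaMonotone}; it follows from the classical Crandall--Tartar-type argument for monotone conservative schemes by applying the update map $H$ both to $(u^n_j)_j$ and to its shift $(u^n_{j+1})_j$, noting that the shifted sequence is also a solution of the scheme, and using monotonicity of $H$ together with $\sum_j\bigl(H(u^n_{j-r+1},\ldots,u^n_{j+r+1}) - H(u^n_{j-r},\ldots,u^n_{j+r})\bigr)=0$ (which reflects the conservative form). Hence no real obstacle arises for the present lemma: the argument is essentially a bookkeeping reduction, and the main conceptual point is that the cell averaging used to define the initial data is itself BV-diminishing, so no constant depending on $\delta$ or the kernel appears in the bound.
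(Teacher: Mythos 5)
Your argument is correct and follows essentially the same route as the paper: both reduce the claim to the discrete BV-diminishing property of the monotone conservative update $H$ (the Crandall--Tartar/Crandall--Majda argument, already recorded as (\ref{eq:dis-BV}) in Lemma \ref{lemmaMonotone}) and then identify the BV seminorm of the piecewise constant interpolant with the discrete sum $\sum_j |u^n_{j+1}-u^n_j|$. Your explicit verification that the cell averaging of $u_0$ is itself BV-diminishing, via the translation estimate $\int_{\R}|u_0(x+h)-u_0(x)|\,dx\leq |h|\,|u_0|_{BV(\R)}$, is a step the paper leaves implicit, and it is a worthwhile addition since it confirms that no constant depending on $\Delta x$, $\delta$, or the kernel enters the final bound.
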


\begin{proof} 
First we write the 
nonlocal scheme (\ref{eq:nonlocalschemeH}) 
\begin{equation*} 
u^{n+1}_j=H(u^n_{j-r},\ldots,u^n_j,\ldots,u^n_{j+r})
\end{equation*}
as $\vec{u}^{n+1}=\vec{H}(\vec{u}^{n})$. 
By the monotonicity of $H$, we may follow the argument documented in \cite{crandall1980monotone} to concludes:  
\begin{equation*}
||\vec{H}(\vec{u}^n)||_{BV(\mathbf{R})}\leq||\vec{u}^n||_{BV(\mathbf{R})},
\end{equation*}
thus for any $n$,
\begin{equation*}
||\vec{u}^{n+1}||_{BV(\mathbf{R})}=||\vec{H}(\vec{u}^{n})||_{BV(\mathbf{R})}\leq||\vec{u}^{n}||_{BV(\mathbf{R})}\leq\ldots \leq ||\vec{u}_{0}||_{BV(\mathbf{R})}.
\end{equation*}
By definition of $u^{\Delta,\delta}$ as in (\ref{eq:u_pw_constant}),  for any $t$, 
 $$||u^{\Delta,\delta}(\cdot,t)||_{BV(\mathbf{R})}=||\vec{u}^{n+1}||_{BV(\mathbf{R})} \leq ||\vec{u}_{0}||_{BV(\mathbf{R})},$$ 
 so (\ref{eq:BVbound_numericalSolution}) is reached.

\end{proof}

 
 \subsection{Convergence as $(\delta,\Delta x)\rightarrow (\delta,0)$ or $(\delta,\Delta x)\rightarrow (0,0)$}
The main result is given in 
 Theorem \ref{thm_convergence_local_nonlocal_entropy_solution} which shows that,  if we fix $\delta$, and send $\Delta x$ to zero, then the numerical solution converges to the entropy solution of the 
 nonlocal model (\ref{eq:Cauchy}); however,  if we send both $\delta$ and $\Delta x$ to zero, 
 the numerical solution converges to the entropy solution of the limiting local conservation law (\ref{eq:local_ConservationLaw}).  
 But first, we state an result which offers the convergence under stronger assumptions on the initial data.

 \begin{theorem}\label{thm_convergence_local_nonlocal_entropy_solution_BV}{{\bf(Convergence of the numerical solution)}}
 For a given terminal time $T$,  we consider the nonlocal scheme
(\ref{eq:nonlocal_ForwardInTime}) on $[0,T]$. Assume $u_0\in L^{1}({\R})\cap L^{\infty}({\R})\cap C([0,T];BV(\R))$, $g$ satisfies (\ref{eq:g_monotone}, \ref{eq:g_consistent}, \ref{eq:g_Lip}),  and kernel 
$\omega^{\delta}$ satisfies condition  (\ref{eq:w_density}). 
Also assume that  $\frac{\Delta t}{\Delta x}$ is fixed, and satisfies the CFL condition
(\ref{eq:CFL}).
 
(i)\quad For a given fixed $\delta>0$, 
as $\Delta x$ goes to $0$, the solution
$u^{\Delta,\delta}$ converges to the entropy solution of the nonlocal conservation law (\ref{eq:Cauchy} ), $u^{\delta}$, in $L^1_{loc}({\R})$ uniformly for $t\in[0,T]$: 
\begin{equation}\label{eq: converge_L1_nonlocal}
\lim_{\Delta x\rightarrow 0}\sup_{t\in[0,T]}\int_{{\R}}|u^{\Delta,\delta}(\cdot,t)-u^{\delta}(\cdot,t)|dxdt=0.
\end{equation}
(ii)\quad  
As $\delta$ and $\Delta x$ both go to $0$, $u^{\Delta,\delta}$ converges to the entropy solution of  the local conservation law (\ref {eq:local_ConservationLaw}), $u^{local}:[0,T]\rightarrow L^{1}(\R)$,  in $L^1_{loc}({\R})$ uniformly for $t\in[0,T]$: 
\begin{equation}\label{eq: converge_L1_local}
\lim_{(\Delta x,\delta)\rightarrow (0,0)}\sup_{t\in[0,T]}\int_{{\R}}|u^{\Delta,\delta}(\cdot,t)-u^{local}(\cdot,t)|dxdt=0.
\end{equation}
\end{theorem}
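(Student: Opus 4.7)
The plan is to follow the classical Crandall--Majda / Lax--Wendroff strategy: extract a subsequential $L^1_{\mathrm{loc}}$ limit of $u^{\Delta,\delta}$ using discrete $L^\infty$ and BV bounds, derive a discrete Kru\v{z}kov-type entropy inequality from the monotonicity of $H$, and pass to the limit to identify the limit with the unique entropy solution of either the nonlocal or the local conservation law.

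\medskip

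\textbf{Step 1: compactness.} The discrete maximum principle \eqref{eq:MP_discrete} yields $\|u^{\Delta,\delta}\|_{L^\infty} \leq \|u_0\|_{L^\infty}$, and Lemma~\ref{lemma_BVbound_numericalSolution} provides a $\delta$-independent spatial BV bound. For tightness in $t$, I would estimate directly from the scheme \eqref{eq:nonlocalschemeH}, using consistency $g(c,c)=f(c)$, the Lipschitz bound \eqref{eq:g_Lip}, and the identity $\Delta x\sum_k k W_k=1$ from \eqref{eq:Wk_Sum1}, to obtain
$$\sum_j|u^{n+1}_j - u^n_j|\,\Delta x \;\leq\; C\,\Delta t\sum_{k=1}^{r\vee 1}\Delta x\,k W_k\,\|u^n\|_{BV} \;\leq\; C\,\Delta t\,\|u_0\|_{BV}.$$
This gives Lipschitz-in-$L^1$ continuity in time, uniformly in $\Delta x$ and $\delta$. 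A Helly / Kolmogorov--Riesz argument then extracts a subsequence converging in $L^1_{\mathrm{loc}}(\R\times[0,T])$ to some $\bar u$ inheriting the $L^\infty$ and spatial BV bounds.

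\medskip

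\textbf{Step 2: discrete entropy inequality.} Monotonicity \eqref{eq:H_monotone} together with consistency gives the comparisons $H(\vec u\vee c)\geq H(\vec u)\vee c$ and $H(\vec u\wedge c)\leq H(\vec u)\wedge c$, whose difference produces, for every $c\in\R$,
$$\frac{|u^{n+1}_j - c| - |u^n_j - c|}{\Delta t} + \sum_{k=1}^{r\vee 1}\bigl[q(u^n_j,u^n_{j+k};c) - q(u^n_{j-k},u^n_j;c)\bigr]W_k \;\leq\; 0,$$
with $q$ the nonlocal entropy flux \eqref{eq:entropy_flux_nonlocal}. Multiplying by a nonnegative test function $\phi$ sampled at $(x_j,t^n)$, summing in $(j,n)$, and performing discrete summation by parts in $n$ on the first term and in $j$ on the second produces a fully discrete analogue of \eqref{eq:entropy_solution_nonlocal}.

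\medskip

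\textbf{Step 3: passage to the limit.} For part (i), with $\delta$ fixed, the weights $W_k$ from \eqref{eq:Wk} form a Riemann sum for $\omega^\delta$ on $[0,\delta]$ and $u^{\Delta,\delta}\to\bar u^\delta$ in $L^1_{\mathrm{loc}}$; the Lipschitz estimate \eqref{eq:q_Lip_continuity}, the $\delta$-uniform BV bound, and dominated convergence let me pass to the limit in the discrete entropy inequality and recover \eqref{eq:entropy_solution_nonlocal} for $\bar u^\delta$. Theorem~\ref{thm_uniqueness} then identifies $\bar u^\delta$ with $u^\delta$ and promotes subsequential convergence to convergence of the full family. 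For part (ii), where $\delta\to 0$ simultaneously, I would use the splitting
$$q(u,\tau_h u;c) \;=\; q(u,u;c) \,+\,\bigl[q(u,\tau_h u;c) - q(u,u;c)\bigr],$$
recognizing $q(u,u;c) = \mathrm{sgn}(u-c)(f(u)-f(c))$ as the \emph{local} Kru\v{z}kov entropy flux. Integrated against $(\tau_h\phi - \phi)\omega^\delta(h)/h$ over $h\in(0,\delta]$, the first piece converges to $\phi_x\,\mathrm{sgn}(\bar u-c)(f(\bar u)-f(c))$ as $\delta\to 0$ thanks to the normalization $\int_0^\delta\omega^\delta=1$ and the smoothness of $\phi$. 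The second piece is bounded by $C\|\phi_x\|_\infty|\bar u(x+h)-\bar u(x)|$ via \eqref{eq:q_Lip_continuity}, whose $L^1_x$-norm is $\leq Ch\,\|\bar u(\cdot,t)\|_{BV}$, yielding an $O(\delta\|\bar u\|_{BV})$ error after integration against $h\omega^\delta$. The same splitting at the discrete level, combined with the uniform BV bound and \eqref{eq:Wk_Sum1}, gives an $O(\delta)$ error independently of $\Delta x$. The resulting inequality is the classical Kru\v{z}kov entropy inequality for \eqref{eq:local_ConservationLaw}, so Kru\v{z}kov's uniqueness theorem identifies $\bar u$ with $u^{\mathrm{local}}$ and upgrades to full-family convergence.

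\medskip

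\textbf{Main obstacle.} The delicate point is step~(ii): the kernel $\omega^\delta$ concentrates near zero while the discrete weights $W_k$ depend simultaneously on $\delta$ and $\Delta x$, so the two limits genuinely interact. The $\delta$-\emph{independent} BV bound of Lemma~\ref{lemma_BVbound_numericalSolution} is the crucial quantitative ingredient that renders the splitting above $O(\delta)$ uniformly in $\Delta x$, enabling the simultaneous passage to the limit.
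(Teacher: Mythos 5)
Your proposal is correct and follows essentially the same route as the paper: $L^\infty$/BV compactness plus the $O(\Delta t)$ time-continuity estimate (the paper's Lemma~\ref{lemma_H(u0)-u0}), the discrete Kru\v{z}kov inequality from monotonicity of $H$ (Proposition~\ref{proposition_EntropySolutions}), a Lax--Wendroff passage to the limit with the $W_k$ viewed as a Riemann sum for $\omega^\delta$ (Proposition~\ref{convergence_local_nonlocal}), and identification of the limit via uniqueness. The only cosmetic difference is in part (ii), where your explicit splitting $q(u,\tau_h u;c)=q(u,u;c)+O(h\|u\|_{BV})$ gives a slightly more quantitative $O(\delta)$ error than the paper's dominated-convergence argument; both rest on the same $\delta$-independent BV bound, and you correctly note, as the paper does, that only $L^1_{loc}$ (not $L^1$) precompactness is available because the scheme's numerical domain of dependence is unbounded as $\Delta x\to 0$.
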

 
\begin{proof}
We mainly follows the first part of proof for Theorem 1 in \cite{crandall1980monotone}  by Crandall and
Majda.  Without repeating the same type of calculations, 
we focus on making clear a few facts as follows and highlight the
necessary changes to the original proof to show how the new proof may be constructed. 
Some necessary technical results used in the derivation are shown separately in the later part of the section.

\vskip.15cm 

1) \quad  By Lemma \ref{lemma_mathbbm_g}, our nonlocal scheme (\ref{eq:nonlocal_ForwardInTime})
can be rewritten into the conservative form  (0.4) in \cite{crandall1980monotone}:
\begin{equation}\label{eq:nonlocal_scheme_in _local_form}
u^{n+1}_j=u^n_j-\frac{\Delta t}{\Delta x}[\mathbbm{g}(u^n_{j-r+1},\ldots,u^n_{j+r})-\mathbbm{g}(u^n_{j-r},\ldots,u^n_{j+r-1})],
\end{equation}
with  $\mathbbm{g}$ being Lipschitz continuous and consistent, so the nonlocal scheme (\ref{eq:nonlocal_ForwardInTime}) is conservative and consistent.  (\ref{eq:nonlocal_ForwardInTime}) is also monotone by Lemma \ref{lemmaMonotone}. 

\vskip.15cm 

2) \quad By Lemma \ref{lemma_H(u0)-u0}, we have the nonlocal version of Proposition 3.5 in \cite{crandall1980monotone}. Thus Corollary 3.6 in \cite{crandall1980monotone} also holds for our scheme (\ref{eq:nonlocal_scheme_in _local_form}). 

\vskip.15cm 

3)\quad  Notice that equation (5.2)  in \cite{crandall1980monotone} does not hold for our scheme (\ref{eq:nonlocal_ForwardInTime}), since as $\Delta x$  vanishes, our scheme  may have infinite propagation speed. Indeed, 
at any time $t=K \Delta t$, $K\in \mathbb{N}$,  the value of  $u^0_j$ has influenced the value of $u$ on  $[x_j-D, x_j+D]$,  where 
$D=\delta K$. 
Observing that $\frac{\Delta t}{\Delta x}$ is a constant, say $c$, we have $D=\delta\frac{t}{\Delta t}=\frac{\delta t}{c\Delta x}$, 
so when $\Delta x$ go to zero (no matter $\delta$ is fixed or goes to zero), it is possible that $\frac{\delta}{\Delta x}$ go to $+\infty$, thus $D$ could be unbounded.  

\vskip.15cm 

4) \quad Since equation (5.2)  in \cite{crandall1980monotone} does not hold for our scheme (\ref{eq:nonlocal_ForwardInTime}),  neither does equation (5.3) in \cite{crandall1980monotone}.  That is, fixing a horizon $\delta$,  for our scheme we only have precompactness in $L^{1}_{loc}({\R})$ instead of $L^{1}({\R})$: 
\begin{equation*}
\{u^{\Delta,\delta}(\cdot,t): 0\leq t\leq T, 0\leq \Delta x\leq 1\}  \mbox{ is precompact in }  L^{1}_{loc}({\R}).
\end{equation*}
Therefore,   following the proof therein, we have: 
there exists a subsequence $\{u^{\Delta_k,\delta}\}_k$ (where $(\Delta x)_k\rightarrow 0$ when $k\rightarrow \infty$) and a function $u^{*}\in L^{1}_{loc}({\R})$, such that 
for any compact set $\Omega\subset{\R}$,
 \begin{equation}\label{eq: converge_subsequence_nonlocal}
\lim\limits_{k\rightarrow\infty}\max_{t\in[0,T]}||u^{\Delta_k,\delta}(\cdot,t)-u^{*}(\cdot,t)||_{L^{1} (\Omega)}=0.
\end{equation}
Since $||u^{\Delta,\delta}(\cdot,t)||_{L^1(\R)}\leq||u_0||_{L^{1}(\R)}< +\infty$, $u^{*}\in L^1(\R)$.

\vskip.15cm 

Now, for a fixed $\delta>0$, by (i) of Proposition \ref{proposition_EntropySolutions},  $u^{*}$ is an entropy solution of the nonlocal conservation law corresponding to the given $\delta$, denoted as $u^{\delta}$. The uniqueness of the nonlocal entropy solution (given in Theorem \ref{thm_uniqueness})  guarantees the uniqueness of the limit function 
$u^{*}$, thus 
we have the convergence of the whole sequence $u^{\Delta,\delta}$, reaching  (\ref{eq: converge_L1_nonlocal}).    Also, by Lemma \ref{lemmaMonotone} and Lemma \ref{lemma_BVbound_numericalSolution}, we have
\begin{equation*}
||u^{\Delta,\delta}(\cdot,t)||_{Z}\leq  ||u_0||_{Z} \mbox{  for all } t\in[0,T],\quad\quad\quad   Z=L^{\infty}(\R)\; \mbox{ and }\; BV(\R),   
\end{equation*}
thus, 
$u^{*}=u^{\delta}$ satisfies
\begin{equation}\label{eq:bded_continuity_u*}
||u^{*}(\cdot,t)||_{Z}\leq  ||u_0||_{Z} \mbox{  for all } t\in[0,T].
\end{equation}

6) Similarly, by the precompactness of $\{u^{\Delta,\delta}(\cdot,t): 0\leq t\leq T, 0\leq \Delta x, \delta \leq 1\} $ in $L^{1}_{loc}({\R})$, 
there exists a subsequence  $\{u^{\Delta_k,\delta_k}\}_k$ (where $((\Delta x)_k,\delta_k)\rightarrow(0, 0)$ when $k\rightarrow \infty$)  and a function $u^{**}\in L^{1}({\R})$, such that for any compact set $\Omega\subset{\R}$,
  \begin{equation}\label{eq: converge_subsequence_local}
  \lim\limits_{ k\rightarrow\infty}\max_{t\in[0,T]}||u^{\Delta_k,\delta_k}(\cdot,t)-u^{**}(\cdot,t)||_{L^{1} (\Omega)}=0.\end{equation}
  By (ii) of Proposition \ref{proposition_EntropySolutions},  $u^{**}$ is an entropy solution of the local conservation law, denoted as $u^{local}$. The uniqueness of local entropy solution guarantees the uniqueness of the limit function $u^{**}$, and thus 
we have the convergence of the whole sequence $u^{\Delta,\delta}$,  reaching (\ref{eq: converge_L1_local}). And
\begin{equation}\label{eq:bded_continuity_u**}
||u^{**}(\cdot,t)||_{Z}\leq  ||u_0||_{Z} \mbox{  for all } t\in[0,T].\end{equation}
The proof is completed.
\end{proof}


\subsection{General initial data}

Using similar techniques developed for local conservation laws, the above result leads to the folowin conclusion.

 \begin{theorem}\label{thm_convergence_local_nonlocal_entropy_solution}{{\bf(Main convergence theorem)}}
Assume that $u_0\in L^{1}({\R})\cap L^{\infty}({\R})$, and all other assumptions are the same as in Theorem \ref{thm_convergence_local_nonlocal_entropy_solution_BV} (i.e., we remove only the assumption 
that $u_0\in C([0,T];BV(\R))$).   Then the  conclusion of Theorem \ref{thm_convergence_local_nonlocal_entropy_solution_BV} still holds.
\end{theorem}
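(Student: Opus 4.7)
The plan is a standard density argument: approximate the general $u_0 \in L^1(\R) \cap L^\infty(\R)$ by BV data $u_0^\varepsilon$, apply Theorem \ref{thm_convergence_local_nonlocal_entropy_solution_BV} for each fixed $\varepsilon$, and bridge the gap using $L^1$-contraction at both the discrete and continuous levels. Concretely, first I would construct $u_0^\varepsilon \in L^1 \cap L^\infty \cap BV$ by mollifying $u_0$ and truncating it to a compact support of size $1/\varepsilon$, ensuring that $u_0^\varepsilon \to u_0$ in $L^1(\R)$ with $\|u_0^\varepsilon\|_{L^\infty} \leq \|u_0\|_{L^\infty}$. Let $u^{\Delta,\delta,\varepsilon}$ denote the output of the scheme (\ref{eq:nonlocal_ForwardInTime}) with initial data $u_0^\varepsilon$. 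Theorem \ref{thm_convergence_local_nonlocal_entropy_solution_BV} then yields, for each fixed $\varepsilon$, the $L^1_{loc}$-convergence of $u^{\Delta,\delta,\varepsilon}$ to a nonlocal entropy solution $u^{\delta,\varepsilon}$ (respectively to the local entropy solution $u^{local,\varepsilon}$) in the two limiting regimes.

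The next step is to establish discrete $L^1$-contraction for the scheme. Since $\vec{H}$ is monotone (Lemma \ref{lemmaMonotone}, condition (\ref{eq:H_monotone})) and conservative in the form (\ref{eq:nonlocal_scheme_in _local_form}), the classical Crandall--Tartar argument applied to $u^{\Delta,\delta}$ and $u^{\Delta,\delta,\varepsilon}$ yields
$$
\|u^{\Delta,\delta}(\cdot,t) - u^{\Delta,\delta,\varepsilon}(\cdot,t)\|_{L^1(\R)} \leq \|u_0 - u_0^\varepsilon\|_{L^1(\R)} + o(1)_{\Delta x \to 0},
$$
the $o(1)$ term being the cell-averaging error at $t=0$. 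At the continuous level, Theorem \ref{thm_uniqueness} gives $\|u^\delta(\cdot,t) - u^{\delta,\varepsilon}(\cdot,t)\|_{L^1} \leq \|u_0 - u_0^\varepsilon\|_{L^1}$, and Kru\v zkov's classical theory supplies the analogous bound for $u^{local}$ and $u^{local,\varepsilon}$. The existence of $u^\delta$ for a general $L^1 \cap L^\infty$ datum is itself obtained from this contraction: $\{u^{\delta,\varepsilon}\}_\varepsilon$ is Cauchy in $C([0,T];L^1(\R))$, its limit $u^\delta$ inherits the $L^\infty$-bound, and the Kru\v zkov-type entropy inequality (\ref{eq:entropy_solution_nonlocal}) passes to the limit by dominated convergence, using the boundedness of $q$ (Lemma \ref{lemma_q-q}(ii)) combined with the uniform $L^\infty$-bound to dominate the integrand on the compact support of $\phi$.

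With these three ingredients assembled, the conclusion follows from a two-level triangle inequality: on any compact $\Omega \subset \R$,
$$
\|u^{\Delta,\delta}(\cdot,t) - u^\delta(\cdot,t)\|_{L^1(\Omega)} \leq \|u^{\Delta,\delta} - u^{\Delta,\delta,\varepsilon}\|_{L^1(\R)} + \|u^{\Delta,\delta,\varepsilon} - u^{\delta,\varepsilon}\|_{L^1(\Omega)} + \|u^{\delta,\varepsilon} - u^\delta\|_{L^1(\R)}.
$$
Given $\eta > 0$, first choose $\varepsilon$ small enough that the first and third terms are below $\eta/3$ uniformly in $(\Delta x, \delta, t)$; then for that $\varepsilon$, apply Theorem \ref{thm_convergence_local_nonlocal_entropy_solution_BV} to drive the middle term below $\eta/3$ as $\Delta x \to 0$ (uniformly in $t \in [0,T]$). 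The same scheme of estimates, with $u^{local,\varepsilon}$ and $u^{local}$ in place of $u^{\delta,\varepsilon}$ and $u^\delta$, handles the joint limit $(\Delta x, \delta) \to (0,0)$. The main obstacle is really only the bookkeeping for the discrete $L^1$-contraction in the nonlocal setting: the Crandall--Tartar reasoning goes through because it only invokes monotonicity plus conservation, but one must check that summing the scheme over $j$ telescopes in the presence of the weighted nonlocal flux $\sum_k W_k g_{j,j+k}$, which is immediate from (\ref{eq:nonlocal_scheme_in _local_form}) and the Lipschitz continuity of $\mathbbm{g}$.
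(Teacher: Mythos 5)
Your proposal is correct and is essentially the paper's own argument: the paper simply defers to the second part of the proof of Theorem 1 in Crandall--Majda \cite{crandall1980monotone}, which is exactly the density-plus-$L^1$-contraction scheme you describe (approximate $u_0$ by BV data, invoke the BV-case theorem, and close the triangle inequality using the discrete Crandall--Tartar contraction and the continuous contraction from Theorem \ref{thm_uniqueness} and Kru\v{z}kov's theory). Your additional remarks on telescoping via (\ref{eq:nonlocal_scheme_in _local_form}) and on obtaining existence of $u^{\delta}$ for general data as a by-product are consistent with what the cited argument requires.
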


\begin{proof}
The proof is the same as the second part of proof  for Theorem 1 in \cite{crandall1980monotone}, and  Theorem \ref{thm_convergence_local_nonlocal_entropy_solution_BV} above can be used to replace the first part of proof for Theorem 1 in \cite{crandall1980monotone}.
\end{proof}

In the remaining part of this section, we sometimes write $u^{*}(x,t)$ and $u^{**}(x,t)$ as
$u^{*}(x)$ and $u^{**}(x)$, especially in Proposition \ref {convergence_local_nonlocal}  and Lemma \ref{lemma_convergence_ae}.. We now present some of the technical results quoted in the proof
of the above theorem \ref{thm_convergence_local_nonlocal_entropy_solution_BV}.

\begin{lemma}\label{lemma_mathbbm_g}
Let $g$ be consistent, monotone and Lipschitz continuous  (see (\ref{eq:g_consistent}), (\ref{eq:g_monotone}),  (\ref{eq:g_Lip})), also let the nonlocal CFL condition (\ref{eq:CFL}) be satisfied. Then the
nonlocal scheme (\ref{eq:nonlocal_ForwardInTime}) can be rewritten as
\begin{equation}\label{eq:NonlocalScheme_localForm}
u^{n+1}_{j}=u^n_j-\frac{\Delta t}{\Delta x} [\mathbbm{g}(u_{j-r+1},\ldots,u_{j+r})-\mathbbm{g}(u_{j-r},\ldots,u_{j+r-1})], 
\end{equation}
where
\begin{equation}\label{eq:mathbbm_g}
\mathbbm{g}(u_{j-r},\ldots,u_{j+r-1})=\sum_{k=1}^{r\vee 1}\sum_{l=1}^{k}g_{j-l,j-l+k}W_k\Delta x, 
\end{equation}
 and $\mathbbm{g}$ is Lipschitz continuous, and consistent with the local flux $f$:  
 \begin{equation}\label{consistency_mathbbm_g}
 \mathbbm{g}(u,\ldots,u)=f(u).
 \end{equation}
\end{lemma}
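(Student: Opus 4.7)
\textbf{Proof plan for Lemma \ref{lemma_mathbbm_g}.} The plan is to verify the equivalence of the two forms of the scheme by a direct telescoping identity, and then check consistency and Lipschitz continuity of the resulting flux $\mathbbm{g}$ from its definition.

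First, I would compute the one-sided difference $\mathbbm{g}_{j+1}-\mathbbm{g}_j$, where $\mathbbm{g}_j:=\mathbbm{g}(u_{j-r},\ldots,u_{j+r-1})$. Using the definition \eqref{eq:mathbbm_g}, one has
\begin{equation*}
\mathbbm{g}_{j+1}-\mathbbm{g}_j \;=\; \Delta x\sum_{k=1}^{r\vee 1}\sum_{l=1}^{k}\bigl[g_{j+1-l,j+1-l+k}-g_{j-l,j-l+k}\bigr]W_k.
\end{equation*}
For each fixed $k$, reindexing the inner sum via $m=j+1-l$ in the first term and $m=j-l$ in the second term shows that both sums range over consecutive indices and the difference telescopes:
\begin{equation*}
\sum_{l=1}^{k}\bigl[g_{j+1-l,j+1-l+k}-g_{j-l,j-l+k}\bigr]
\;=\;\sum_{m=j-k+1}^{j}g_{m,m+k}-\sum_{m=j-k}^{j-1}g_{m,m+k}
\;=\;g_{j,j+k}-g_{j-k,j}.
\end{equation*}
Dividing by $\Delta x$ and inserting this into the first equation of \eqref{eq:nonlocal_ForwardInTime} immediately yields the local form \eqref{eq:NonlocalScheme_localForm}.

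Next, I would verify consistency. Plugging in constant arguments $u_m\equiv u$ and using $g(u,u)=f(u)$ from \eqref{eq:g_consistent},
\begin{equation*}
\mathbbm{g}(u,\ldots,u)\;=\;\Delta x\sum_{k=1}^{r\vee 1}\sum_{l=1}^{k}f(u)\,W_k\;=\;f(u)\,\Delta x\sum_{k=1}^{r\vee 1}k\,W_k\;=\;f(u),
\end{equation*}
by the normalization identity \eqref{eq:Wk_Sum1}, which gives \eqref{consistency_mathbbm_g}.

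Finally, for Lipschitz continuity, I would use the Lipschitz assumption \eqref{eq:g_Lip} on $g$: each term $g_{j-l,j-l+k}$ is Lipschitz in each of its two arguments with constant $C$, so the total Lipschitz constant of $\mathbbm{g}$, measured in the sup-norm on the vector of arguments, is bounded by
\begin{equation*}
\Delta x\sum_{k=1}^{r\vee 1}\sum_{l=1}^{k} 2C\,W_k\;=\;2C\,\Delta x\sum_{k=1}^{r\vee 1}k\,W_k\;=\;2C,
\end{equation*}
again by \eqref{eq:Wk_Sum1}. No step here is truly a main obstacle; the only point requiring care is keeping the indices aligned in the telescoping argument so that the boundary terms produce exactly $g_{j,j+k}-g_{j-k,j}$, matching the flux stencil of the nonlocal scheme. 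Note that the CFL condition \eqref{eq:CFL} plays no role in this lemma itself---it is assumed only so that the scheme is also monotone via Lemma \ref{lemmaMonotone}, which is used in the surrounding convergence argument.
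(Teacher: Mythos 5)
Your proof is correct and follows essentially the same route as the paper: the same telescoping identity (the paper writes it by expanding $g_{j,j+k}-g_{j-k,j}$ into the sum over $l$, you collapse the difference $\mathbbm{g}_{j+1}-\mathbbm{g}_j$, which is the identical computation read in the opposite direction), the same use of the normalization \eqref{eq:Wk_Sum1} for consistency, and the same appeal to \eqref{eq:g_Lip} for Lipschitz continuity (where you usefully make explicit that the constant $2C$ is uniform in $\Delta x$, $r$ and $\delta$). Your closing remark that the CFL condition is not actually used in this lemma is also accurate.
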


\begin{proof} Using the definition (\ref{eq:mathbbm_g}) in
scheme (\ref{eq:nonlocal_ForwardInTime}), we get  
$$
\aligned
&\sum_{k=1}^{r\vee 1}(g_{j,j+k}-g_{j-k,j})W_k\Delta x= \sum_{k=1}^{r\vee 1}\sum_{l=1}^{k}  (g_{j+1-l,j+1-l+k}-g_{j-l,j-l+k})W_k\Delta x
\\
& = \sum_{k=1}^{r\vee 1}\sum_{l=1}^{k} g_{j+1-l,j+1-l+k}W_k\Delta x -\sum_{k=1}^{r\vee 1}\sum_{l=1}^{k} g_{j-l,j-l+k}W_k\Delta x
\\
& = \mathbbm{g}(u_{j+1-r},\ldots,u_{j+r})-\mathbbm{g}(u_{j-r},\ldots,u_{j+r-1}).
\endaligned
$$
Thus  (\ref{eq:NonlocalScheme_localForm}) is proven.
Since $g$ is Lipschitz continuous, so is $\mathbbm{g}$.  The consistency of $\mathbbm{g}$ comes from  the consistency of $g$ and normalization condition of $W_k$ given in (\ref{eq:Wk_Sum1}), that is, 
\begin{equation*}
\mathbbm{g}(u,\ldots, u)=\sum_{k=1}^{r\vee 1}\sum_{l=1}^{k}g(u,u)W_k\Delta x=f(u) \sum_{k=1}^{r\vee 1}\sum_{l=1}^{k}W_k\Delta x=f(u).\qedhere
\end{equation*}
\end{proof}

\begin{lemma}\label{lemma_H(u0)-u0}
Assume $u_0\in L^{\infty}(\R)\cap BV(\R)$. For the nonlocal scheme (\ref{eq:nonlocal_ForwardInTime}), we have
\begin{equation}\label{eq:prop3.5}
||H(u)-u||_{L^1(\Delta)}\leq C\Delta t||u^0||_{BV(\Delta)},
\end{equation}
with $H$ is as in (\ref{eq:H}),
and $C$ is independent of $\Delta t$ and $\Delta x$. Here the discrete $L^1$ and BV norms are defined as
\[   ||u||_{L^{1}(\Delta)}=\Delta x\sum_{j=-\infty}^{\infty} |u_j|,\quad\mbox{and}\quad  ||u||_{BV(\Delta)}=\Delta x\sum_{j=-\infty}^{\infty} |u_{j+1}-u_{j}|. \]
\end{lemma}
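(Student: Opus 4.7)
The plan is to exploit the conservative form \eqref{eq:NonlocalScheme_localForm} from Lemma \ref{lemma_mathbbm_g}, combined with Lipschitz continuity of the two-point flux $g$ and, most crucially, the normalization identity \eqref{eq:Wk_Sum1}, namely $\Delta x\sum_{k=1}^{r\vee 1}kW_k=1$. That identity is exactly what will absorb the $1/\Delta x$ factor that naturally appears when one sums increments of a stencil of width $2r=O(\delta/\Delta x)$.

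Writing $H(u)_j-u_j=-\Delta t\sum_{k=1}^{r\vee 1}W_k\bigl[g(u_j,u_{j+k})-g(u_{j-k},u_j)\bigr]$, I would first use consistency $g(u_j,u_j)=f(u_j)$ and Lipschitz continuity \eqref{eq:g_Lip} to estimate each summand as
\begin{equation*}
|g(u_j,u_{j+k})-g(u_{j-k},u_j)|\leq C|u_{j+k}-u_j|+C|u_j-u_{j-k}|\leq C\sum_{l=0}^{k-1}\bigl(|u_{j+l+1}-u_{j+l}|+|u_{j-l}-u_{j-l-1}|\bigr),
\end{equation*}
i.e.\ a telescoping argument reduces each interaction at range $k$ to at most $2k$ nearest-neighbor increments. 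Next, summing $\sum_j|H(u)_j-u_j|$ and swapping the order of summation, each nearest-neighbor increment $|u_{m+1}-u_m|$ is picked up at most $2k$ times in the range-$k$ block, so
\begin{equation*}
\sum_j|H(u)_j-u_j|\leq 2C\Delta t\Bigl(\sum_{k=1}^{r\vee 1}kW_k\Bigr)\sum_j|u_{j+1}-u_j|.
\end{equation*}
Multiplying by $\Delta x$ and invoking the normalization $\Delta x\sum_k kW_k=1$ gives $\|H(u)-u\|_{L^1(\Delta)}\leq 2C\Delta t\sum_j|u_{j+1}-u_j|$, which is the discrete BV norm (up to the $\Delta x$ convention used in the statement) of $u$; applying this to $u=u^n$ and then invoking the BV bound \eqref{eq:dis-BV} yields the claim with $\|u^0\|_{BV(\Delta)}$ on the right.

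The main subtlety is the book-keeping of how often each nearest-neighbor increment appears across the $k$- and $l$-sums, which is what forces the factor $\sum_k kW_k$ rather than $\sum_k W_k$. Equivalently, one could do this bookkeeping intrinsically on $\mathbbm{g}$ from \eqref{eq:mathbbm_g}: the Lipschitz constant of $\mathbbm{g}$ in its $i$-th slot is $O(W_k\Delta x)$ for those $(k,l)$ pairs that touch slot $i$, and summing these gives exactly $C\sum_k kW_k\Delta x=C$. Either bookkeeping route works, and both make clear why the bound is independent of $\Delta x$ and $\delta$ despite the stencil width growing like $r=\lfloor\delta/\Delta x\rfloor$; the nonlocal CFL condition \eqref{eq:CFL} plays no role here, only consistency, Lipschitz continuity, and the weight normalization.
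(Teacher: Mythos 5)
Your proposal is correct and follows essentially the same route as the paper's proof: a Lipschitz bound on $g(u_j,u_{j+k})-g(u_{j-k},u_j)$, telescoping each range-$k$ difference into $k$ nearest-neighbor increments, swapping the order of summation, and absorbing the stencil width via the normalization $\Delta x\sum_k kW_k=1$. Your explicit final appeal to the discrete BV bound \eqref{eq:dis-BV} to pass from $\|u^n\|_{BV(\Delta)}$ to $\|u^0\|_{BV(\Delta)}$ is a small but welcome completion of a step the paper leaves implicit.
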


\begin{proof}
 At any time level $n$,
\begin{eqnarray*}
||H(u)-u||_{L^1(\Delta)}&=&\sum_{j=-\infty}^{\infty} \left|\Delta
t\sum_{k=1}^{r\vee 1}(g_{j,j+k}-g_{j-k,j})W_k\right|\Delta x\\
&\leq&\Delta x\Delta t\sum_{j=-\infty}^{\infty}\sum_{k=1}^{r\vee
1}|g_{j,j+k}-g_{j-k,j}|W_k\\
&\leq& C \Delta x\Delta t  \sum_{j=-\infty}^{\infty}\sum_{k=1}^{r\vee
1}\left(|u_{j}-u_{j+k}|+|u_{j-k}-u_{k}|\right)W_k.
\end{eqnarray*}
Note that,  for initial data $u^0\in L^{1}(\R)$, by Lemma \ref{lemmaMonotone}, scheme (\ref{eq:nonlocalschemeH})
enjoys the Maximum Principle, so $\{u^n_j\}$ is bounded, say by $A$.
Without loss of generality, assume $r\geq 1$.  
We can switch the order of summation, to get
\begin{eqnarray*}
||H(u)-u||_{L^1(\Delta)}&\leq&C \Delta x\Delta t \left[
\sum_{j=-\infty}^{\infty}\sum_{k=1}^{r\vee
1} (|u_{j}-u_{j+k}|  +  
|u_{j-k}-u_{k}|)W_k\right]\\
&\leq& C \Delta x\Delta t  \left[ \sum_{k=1}^{r\vee
1}W_k\sum_{j=-\infty}^{\infty} (|u_{j}-u_{j+k}|+ 
|u_{j-k}-u_{k}|)\right]\\
&\leq& 2 C\Delta x\Delta t 
\sum_{j=-\infty}^{\infty}\sum_{k=1}^{r\vee
1}|u_{j}-u_{j-k}|W_k.
\end{eqnarray*}
Let us break up the terms involving the differences of $\{u_j\}$ into neighboring
differences, that is, 
\begin{eqnarray*}
||H(u)-u||_{L^1(\Delta)}
&\leq&  
C \Delta x\Delta t 
\sum_{k=1}^{r\vee 1} W_k\sum_{l=1}^{k}\sum_{j=-\infty}^{\infty}|u_{j-l+1}-u_{j-l}|\\
&=& C \Delta x\Delta t 
\sum_{k=1}^{r\vee 1} W_k\sum_{l=1}^{k}\sum_{j=-\infty}^{\infty}|u_{j+1}-u_{j}|\\
&\leq& C \Delta x\Delta t 
\sum_{k=1}^{r\vee 1}k W_k\sum_{j=-\infty}^{\infty}|u_{j+1}-u_{j}|\\
&\leq&C\Delta t  \sum_{j=-\infty}^{\infty}|u_{j+1}-u_j|\left( \sum_{k=1}^{r\vee 1} k
W_k \Delta x\right)=C\Delta t ||u||_{BV(\Delta )}, 
\end{eqnarray*}
where the constant $C$ only depends on $g$, and the last inequality comes from (\ref{eq:Wk_Sum1}).
\end{proof}

\begin{remark}
 Lemma \ref{lemma_H(u0)-u0} mimics Proposition 3.5 in \cite{crandall1980monotone}. In Proposition 3.5 in \cite{crandall1980monotone},   the constant $C$ on the 
 right hand side of (\ref{eq:prop3.5}) depends on $r$,  the number of cells involved in numerical flux $\mathbbm{g}$.
 However, in Lemma \ref{lemma_H(u0)-u0}, we are able to bound the left hand side of (\ref{eq:prop3.5})
in a way such that the coefficient $C$ is independent of $r$, and independent of $\Delta x$, $\Delta t$  and $\delta$.  
\end{remark}

\begin{proposition}\label{proposition_EntropySolutions}
Let $u_0\in BV(\R)$.  
Suppose that all the assumptions of Theorem \ref{thm_convergence_local_nonlocal_entropy_solution} hold, and $u^{\Delta,\delta}$ is defined in (\ref{eq:u_pw_constant}).\\
(i) 
$u^{*}$ in (\ref{eq: converge_subsequence_nonlocal})  is an entropy solution of the nonlocal conservation law.\\
(ii)  
 $u^{**}$ in (\ref{eq: converge_subsequence_local})   is an entropy solution of the local conservation law.
\end{proposition}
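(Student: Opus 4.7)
The strategy is the classical one for monotone schemes, adapted to the nonlocal setting: derive a \emph{discrete cell entropy inequality}, multiply by a nonnegative test function and sum by parts, and then pass to the limit using the $L^1_{loc}$ convergence (\ref{eq: converge_subsequence_nonlocal}) or (\ref{eq: converge_subsequence_local}). First, using the monotonicity of $H$ established in Lemma \ref{lemmaMonotone} together with consistency $H(c,\ldots,c)=c$, I would observe that for every $c\in\R$
\[
H(u^n\vee c)_j\;\geq\;H(u^n)_j\vee c,\qquad H(u^n\wedge c)_j\;\leq\;H(u^n)_j\wedge c,
\]
so that $|u^{n+1}_j-c|\leq H(u^n\vee c)_j-H(u^n\wedge c)_j$. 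Expanding the right-hand side using \eqref{eq:H} and the definition \eqref{eq:entropy_flux_nonlocal} of the nonlocal entropy flux $q$ yields the discrete entropy inequality
\[
|u^{n+1}_j-c|\;\leq\;|u^n_j-c|\;-\;\Delta t\sum_{k=1}^{r\vee 1}W_k\,\bigl[q(u^n_j,u^n_{j+k};c)-q(u^n_{j-k},u^n_j;c)\bigr].
\]

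Next, for $\phi\in C^1_0(\R\times[0,T])$ with $\phi\geq 0$, I would set $\phi^n_j=\phi(x_j,t^n)$, multiply the above by $\phi^n_j\,\Delta x$, and sum over $n$ and $j$. A discrete Abel summation in $n$ transfers the difference from $|u^{n+1}_j-c|-|u^n_j-c|$ onto $\phi$, producing the discrete analogue of $\int|u-c|\phi_t$. For the flux term, shifting the index $j\mapsto j+k$ in the sum containing $q(u^n_{j-k},u^n_j;c)$ yields the key identity
\[
\sum_j\phi^n_j\bigl[q_{j,j+k}-q_{j-k,j}\bigr]\;=\;-\sum_j\bigl(\phi^n_{j+k}-\phi^n_j\bigr)\,q_{j,j+k},
\]
so that the flux contribution becomes
\[
\sum_{n,j,k}\frac{\phi^n_{j+k}-\phi^n_j}{k\Delta x}\,q(u^n_j,u^n_{j+k};c)\,(k\Delta x\,W_k)\,\Delta x\,\Delta t,
\]
which is a Riemann-type sum for the integral in \eqref{eq:entropy_solution_nonlocal}, since $k\Delta x\,W_k\approx \int_{(k-1)\Delta x}^{k\Delta x}\omega^\delta(h)\,dh$ by \eqref{eq:Wk}.

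For part (i), with $\delta$ fixed and $\Delta x\to 0$ along the subsequence of (\ref{eq: converge_subsequence_nonlocal}), I would pass to the limit using: the uniform $L^\infty$ and BV bounds from Lemma \ref{lemmaMonotone} and Lemma \ref{lemma_BVbound_numericalSolution}, the Lipschitz continuity of $q$ (Lemma \ref{lemma_q-q}(i)), and dominated convergence to handle the compact support of $\phi$. The Riemann sum in $k$ converges to $\int_0^\delta(\cdot)\omega^\delta(h)\,dh$, the outer sums to $\int_0^T\!\!\int_\R$, giving exactly \eqref{eq:entropy_solution_nonlocal} for $u^*$. For part (ii), with $(\Delta x,\delta)\to(0,0)$, essentially the same passage works, with the additional observation that as $\delta\to 0$, the continuity of $q$ in its first two arguments, the normalization $\int_0^\delta\omega^\delta=1$, and the smoothness of $\phi$ together give
\[
\int_0^\delta\frac{\phi(x+h,t)-\phi(x,t)}{h}\,q(u,\tau_h u;c)\,\omega^\delta(h)\,dh\;\longrightarrow\;\phi_x(x,t)\,q(u,u;c)=\phi_x(x,t)\,\mathrm{sgn}(u-c)\bigl(f(u)-f(c)\bigr),
\]
where the last equality uses the consistency of $q$ noted right after \eqref{eq:entropy_flux_nonlocal}. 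The limit identity is then the Kru\v{z}kov entropy inequality for the local law, and $u^{**}$ is therefore the entropy solution of \eqref{eq:local_ConservationLaw}.

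The main obstacle is justifying this last double limit carefully and uniformly: we must control the nonlocal difference quotient of $\phi$ simultaneously in $\delta$ and in $\Delta x$, and at the same time replace $q(u^{\Delta,\delta}(x),u^{\Delta,\delta}(x+h);c)$ by its diagonal value $q(u^{**},u^{**};c)$ despite only having $L^1_{loc}$ convergence. The BV bound \eqref{eq:BVbound_numericalSolution} (inherited in the limit via lower semicontinuity) is the crucial ingredient here: it allows a continuity-in-$h$ estimate $\|\tau_h u^{\Delta,\delta}-u^{\Delta,\delta}\|_{L^1}\leq h\,\|u_0\|_{BV}$ uniformly, so the $q$-term can be compared with $q(u,u;c)$ at cost $O(\delta)\to 0$, and the remaining factor $(\phi(x+h)-\phi(x))/h$ is uniformly bounded by $\|\phi_x\|_\infty$ on the support of $\phi$. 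The time-derivative and initial data terms are handled as in the local Crandall--Majda argument, using Lemma \ref{lemma_H(u0)-u0} to control temporal oscillations independently of $r$, $\Delta x$ and $\delta$.
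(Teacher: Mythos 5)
Your proposal is correct and follows essentially the same route as the paper: the discrete Kru\v{z}kov inequality obtained from the monotonicity of $H$ via $H(u^n\vee c)_j$ and $H(u^n\wedge c)_j$, multiplication by a nonnegative discretized test function and summation by parts in $j$ and $n$, and passage to the limit using the $L^\infty$/BV bounds, the Lipschitz continuity and boundedness of $q$ from Lemma \ref{lemma_q-q}, and dominated convergence. The paper merely packages the flux-term limit into the separate Proposition \ref{convergence_local_nonlocal} and Lemma \ref{lemma_convergence_ae} (using explicit piecewise reconstructions $\phi^{\Delta}$, $h^{\Delta}$, $\bar{\omega}^{\Delta}$), and your use of the BV translation estimate to replace $q(u,\tau_h u;c)$ by its diagonal value as $\delta\to 0$ matches the paper's treatment of the term $|u^{**}(x+h^{\Delta})-u^{**}(x)|$.
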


\begin{proof}
By the CFL condition (\ref{eq:CFL}) and Lemma \ref{lemmaMonotone},   the function $H$, given in scheme (\ref{eq:nonlocal_ForwardInTime}) via
$u^{n+1}_{j}=H(u^n_{j-r},\ldots, u^n_{j+r})
$
is nondeceasing with respect to each of its arguments.  So for any constant $c\in{\R}$, 
\begin{eqnarray*}
H(u^n_{j-r}\wedge c,\ldots, u^n_{j+r}\wedge c)\leq & u^{n+1}_j &\leq H(u^n_{j-r}\vee c,\ldots, u^n_{j+r}\vee c),\\
H(u^n_{j-r}\wedge c,\ldots, u^n_{j+r}\wedge c) \leq & c&\leq H(u^n_{j-r}\vee  c,\ldots, u^n_{j+r}\vee  c),\\
u^{n+1}_j \vee c&\leq& H(u^n_{j-r}\vee c,\ldots, u^n_{j+r}\vee c),\\
u^{n+1}_j \wedge c&\geq& H(u^n_{j-r}\wedge c,\ldots, u^n_{j+r}\wedge c).
\end{eqnarray*}
Subtracting the last two inequalities,
\begin{eqnarray*}
u^{n+1}_j \vee c-u^{n+1}_j \wedge c&\leq& H(u^n_{j-r}\vee c,\ldots, u^n_{j+r}\vee c)-H(u^n_{j-r}\wedge c,\ldots, u^n_{j+r}\wedge c),
\end{eqnarray*}
thus we find 
\begin{eqnarray*}
u^{n+1}_j \vee c-u^{n+1}_j \wedge c &\leq&  
-\Delta t \sum_{k=1}^{r\vee 1}\big[ g(u^n_j\vee c, u^n_{j+k}\vee c)- g(u^n_{j-k}\vee c, u^n_{j}\vee c)\big]W_k\\
&& 
+\Delta t \sum_{k=1}^{r\wedge 1}\big[ g(u^n_j\wedge c, u^n_{j+k}\wedge c)- g(u^n_{j-k}\wedge c, u^n_{j}\wedge c)\big]W_k.
\end{eqnarray*}
and
\begin{eqnarray*}
|u^{n+1}_j-c|&\leq&|u^n_j-c| -\Delta t \sum_{k=1}^{r\vee 1}\big[ g(u^n_j\vee c, u^n_{j+k}\vee c)- g(u^n_j\wedge c, u^n_{j+k}\wedge c)\big]W_k\\
&&+\Delta t \sum_{k=1}^{r\vee 1}\big[ g(u^n_{j-k}\vee c, u^n_{j}\vee c)- g(u^n_{j-k}\wedge c, u^n_{j}\wedge c)\big]W_k
\end{eqnarray*}
\begin{eqnarray*}
\frac{|u^{n+1}_j-c|-|u^{n}_j-c|}{\Delta t}&\leq&
-\sum_{k=1}^{r\vee 1}\big[q(u^n_{j},u^n_{j+k};c)-q(u^n_{j-k},u^n_{j};c)\big]W_k,
\end{eqnarray*}
where $q$ is the nonlocal entropy flux defined in (\ref{eq:entropy_flux_nonlocal}).
We use the notaiton 
\begin{equation}\label{eq:phi_j}
\phi\in C_{c}^{\infty}(\mathbb{R^{+}}\times{\R}),\quad \phi\geq0,\quad 
\phi^n_j:=\frac{\phi(x_{j-\frac{1}{2}})-\phi(x_{j+\frac{1}{2})}}{\Delta x}.
\end{equation}
Multiplying the above inequality by $\Delta t\Delta x\phi^n_j\geq 0$, and summing with respect to $j$ and $n$, we get
\begin{eqnarray}
&&\Delta t\Delta x \sum_{n=0}^{N}\sum_{j=-\infty}^{\infty}\frac{|u^{n+1}_j-c|-|u^{n}_j-c|}{\Delta t}\phi^n_j 
\nonumber\\
&&\quad + \Delta t\Delta x\sum_{n=0}^{N}\sum_{j=-\infty}^{\infty}\phi^n_j\sum_{k=1}^{r\vee 1}\big[q(u^n_{j},u^n_{j+k};c)-q(u^n_{j-k},u^n_{j};c)\big]W_k\leq 0.\label{discrete_2ndTerm_EntropyInequality}
\end{eqnarray}
Let $\{(\Delta x)_k\}_k$ be the subsequence of $\Delta x$ in (\ref{eq: converge_subsequence_nonlocal}). 
By (\ref{eq: converge_subsequence_nonlocal}),  and with the same argument as in \cite{crandall1980monotone}, 
the first term in the above  (\ref{discrete_2ndTerm_EntropyInequality})  converges as follows
\begin{eqnarray*}
&&\lim_{(\Delta x)_k\rightarrow 0}\Delta t\Delta x \sum_{n=0}^{N}\sum_{j=-\infty}^{\infty}\frac{|u^{n+1}_j-c|-|u^{n}_j-c|}{\Delta t}\phi^n_j= -\int_{0}^{T}\int_{{\R}}|u^{*}-c|\phi_t dxdt.
\end{eqnarray*}
Let $\{\delta_k, (\Delta x)_k\}_k$ be the subsequence of $\{\delta, \Delta x\}$ in (\ref{eq: converge_subsequence_local}). By  (\ref{eq: converge_subsequence_local}), a similar argument yields
\begin{eqnarray*}
&&\lim_{\delta_k\rightarrow 0,(\Delta x)_k\rightarrow 0}\Delta t\Delta x \sum_{n=0}^{N}\sum_{j=-\infty}^{\infty}\frac{|u^{n+1}_j-c|-|u^{n}_j-c|}{\Delta t}\phi^n_j= \int_{0}^{T}\int_{{\R}}|u^{**}-c|\phi_t dxdt.
\end{eqnarray*}
The proof can be completed by applying  the Proposition  \ref{convergence_local_nonlocal} 
proved below to the  other term in  (\ref{discrete_2ndTerm_EntropyInequality}).
\end{proof}

\begin{proposition}\label{convergence_local_nonlocal} 
Assume that $u_0\in BV(\R)$ and $\phi$ satisfies (\ref{eq:phi_j})
and all the assumptions of Proposition \ref{proposition_EntropySolutions} hold. 
For a given $c\in{\R}$, and denote the nonlocal entropy flux  $q(a,b;c)$ as $q(a,b)$  for brevity. \\
 \noindent(i)\quad For a fixed $\delta>0$, let $\{(\Delta x)_l\}_l$ be the subsequence of $\Delta x$ in 
 (\ref{eq: converge_subsequence_nonlocal}).  Then one has
\begin{eqnarray}
&&\lim_{(\Delta x)_l\rightarrow 0}\Delta t\Delta x\sum_{n=0}^{N}\sum_{j=-\infty}^{\infty}\sum_{k=1}^{r\vee 1}\phi^n_j[q(u^n_j,u^n_{j+k})-q(u^n_{j-k},u^n_{j})]W_k \nonumber\\
&&\quad = 
-\int_{0}^{T}\int_{{\R}}\int_{0}^{\delta}[\phi(x)-\phi(x+h)] q(u^{*}(x),u^{*}(x+h)) \frac{\omega(h)}{h}dhdxdt.\label{eq:convegence_NonocalEntropyIntegral}
\end{eqnarray}
 \noindent(ii)\quad  
 Let $\{\delta_l, (\Delta x)_l\}_l$ be the subsequence of $\{\delta, \Delta x\}$ in (\ref{eq: converge_subsequence_local}).  Then: 
\begin{eqnarray}
&&\lim_{\delta_l\rightarrow 0,(\Delta x)_l\rightarrow 0}\Delta t\Delta x \sum_{n=0}^{N}\sum_{j=-\infty}^{\infty}\sum_{k=1}^{r\vee 1}\phi^n_j[q(u^n_j,u^n_{j+k})-q(u^n_{j-k},u^n_{j})]W_k\nonumber \\
&&\quad =
-\int_{0}^{T}\int_{{\R}}\phi_x sgn(u^{**}-c) (f(u^{**})-f(c))dxdt.\label{eq:convegence_LocalEntropyIntegral}
\end{eqnarray}
\end{proposition}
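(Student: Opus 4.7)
The plan is to identify the discrete sum on the LHS as a Riemann-sum approximation of the target integral, then pass to the limit using the $L^1_{\mathrm{loc}}$ convergences $u^{\Delta,\delta}\to u^{*}$ (respectively $u^{**}$) supplied by \eqref{eq: converge_subsequence_nonlocal}--\eqref{eq: converge_subsequence_local}, together with the Lipschitz continuity and sublinear growth of $q$ from Lemma~\ref{lemma_q-q} and the uniform $L^\infty$ and BV bounds from Lemmas~\ref{lemmaMonotone} and~\ref{lemma_BVbound_numericalSolution}.

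The first step is an index shift: substituting $j\mapsto j+k$ in the $q(u^n_{j-k},u^n_j)$ term yields
\[
\sum_j \phi^n_j\bigl[q(u^n_j,u^n_{j+k})-q(u^n_{j-k},u^n_j)\bigr] \;=\; \sum_j (\phi^n_j-\phi^n_{j+k})\,q(u^n_j,u^n_{j+k}),
\]
so the full LHS becomes $\Delta t\,\Delta x\sum_{n,j}\sum_{k=1}^{r\vee 1} W_k\,(\phi^n_j-\phi^n_{j+k})\,q(u^n_j,u^n_{j+k})$. Setting $h_k=k\Delta x$, the difference $\phi^n_j-\phi^n_{j+k}$ is the natural discretization of $\phi(x,t)-\phi(x+h_k,t)$, so the double sum over $(n,j)$ weighted by $\Delta t\,\Delta x$ is a Riemann sum for $\int_0^T\!\int_\R [\phi(x,t)-\phi(x+h_k,t)]\,q(u^*(x,t),u^*(x+h_k,t))\,dx\,dt$. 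Moreover, the outer sum over $k$ is itself a Riemann sum in $h$ with effective weight $\omega(h)/h$, since $W_k\,h_k\approx \int_{(k-1)\Delta x}^{k\Delta x}\omega(h)\,dh$ by the definition \eqref{eq:Wk} of $W_k$ and the normalization \eqref{eq:Wk_Sum1}.

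For part (i) with $\delta>0$ fixed, the passage $\Delta x\to 0$ is justified as follows. By the Lipschitz continuity of $q$ (Lemma~\ref{lemma_q-q}(i)), replacing $q(u^n_j,u^n_{j+k})$ by $q(u^*(x),u^*(x+h))$ on $I_j\times I^n\times [(k-1)\Delta x,k\Delta x]$ incurs an error bounded pointwise by $|u^{\Delta,\delta}-u^*|(x,t)+|u^{\Delta,\delta}-u^*|(x+h,t)$. Since $\phi$ has compact support, the relevant $x$-range is compact and the $L^1_{\mathrm{loc}}$ convergence $u^{\Delta,\delta}\to u^*$ forces this error to vanish. The uniform $L^\infty$ and BV bounds on $u^{\Delta,\delta}$, combined with the sublinear growth of $q$ (Lemma~\ref{lemma_q-q}(ii)), provide the domination required to invoke the dominated convergence theorem and to identify the limit with the RHS of \eqref{eq:convegence_NonocalEntropyIntegral}.

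For part (ii), after having sent $\Delta x\to 0$ as in part (i), one further takes $\delta\to 0$, using three observations: $[\phi(x+h)-\phi(x)]/h\to\phi_x(x)$ uniformly on compacts by smoothness of $\phi$; $q(u^{**}(x),u^{**}(x+h))\to q(u^{**},u^{**})=\operatorname{sgn}(u^{**}-c)(f(u^{**})-f(c))$ in $L^1_{\mathrm{loc}}$ as $h\to 0$ by Lemma~\ref{lemma_q-q}(i) together with continuity of translation in $L^1$; and $\int_0^\delta\omega(h)\,dh=1$ for every $\delta$, allowing the $h$-integration to collapse onto $h=0$. The principal technical obstacle is the unbounded stencil size $r=\lfloor\delta/\Delta x\rfloor$ and the absence of finite discrete propagation speed noted in the proof of Theorem~\ref{thm_convergence_local_nonlocal_entropy_solution_BV}; this is overcome by exploiting the compact support of $\phi$ to localize the $x$-integration, the uniform BV bound to control $q(u^n_j,u^n_{j+k})$ in a manner uniform in $k$, and Lemma~\ref{lemma_q-q}(ii) for the tails.
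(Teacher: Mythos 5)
Your part (i) is essentially the paper's argument: summation by parts, rewriting the sum as an exact integral of piecewise-constant/linear interpolants, then dominated convergence in $h$ with the inner $(x,t)$-integral handled via the Lipschitz continuity of $q$, the compact support of $\phi$, and the $L^1_{\mathrm{loc}}$ convergence (\ref{eq: converge_subsequence_nonlocal}). One point you gloss over: the discrete shift is $h^{\Delta}(h)=k\Delta x$, not $h$ itself, so after comparing $q(u^n_j,u^n_{j+k})$ with $q(u^*(x),u^*(x+h^{\Delta}(h)))$ you still owe the comparison of $u^*(x+h^{\Delta}(h))$ with $u^*(x+h)$; the paper closes this (term $II$ in Lemma \ref{lemma_convergence_ae}) using that $u^*(\cdot,t)\in BV(\R)$ is continuous a.e., so that $u^*(x+h^{\Delta}(h))\to u^*(x+h)$ for a.e.\ $(x,t)$ and dominated convergence applies. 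This is a small but necessary step.

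Part (ii) has a genuine gap. You propose an \emph{iterated} limit: ``after having sent $\Delta x\to 0$ as in part (i), one further takes $\delta\to 0$.'' But the statement concerns the \emph{joint} limit along the specific subsequence $\{(\delta_l,(\Delta x)_l)\}$ of (\ref{eq: converge_subsequence_local}), in which $\delta_l$ and $(\Delta x)_l$ shrink together; for each $l$ there is a single pair, so there is no ``first limit in $\Delta x$ at fixed $\delta_l$'' available along this sequence. Moreover, the intermediate object produced by the part-(i) limit at fixed $\delta$ involves the nonlocal entropy solution $u^{\delta}$ (your $u^{*}$), not $u^{**}$; to then send $\delta\to 0$ you would need both a uniformity in $\delta$ of the part-(i) convergence and the convergence $u^{\delta}\to u^{**}$ in $L^1_{\mathrm{loc}}$ --- the latter being essentially part of what the overall theorem is establishing, so the argument risks circularity. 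The paper instead estimates the joint limit directly: it bounds
$\bigl|G(\Delta,x,t,h)-\phi_x(x)\,q(u^{**}(x),u^{**}(x))\bigr|$
for $\delta$ and $\Delta x$ simultaneously small by
$C\varepsilon + C\,|u^{\Delta}(x+h^{\Delta})-u^{**}(x+h^{\Delta})|$, using the smoothness of $\phi$, the Lipschitz continuity and sublinear growth of $q$, the $L^1$-continuity of translation for $u^{**}$ (since $h^{\Delta}\le \delta+\Delta x\to 0$), and the subsequence convergence $u^{\Delta_l,\delta_l}\to u^{**}$; integrating against $\bar\omega^{\Delta}(h)\,dh\,dx\,dt$ with $\|\bar\omega^{\Delta}\|_{L^1}=1$ then closes the estimate. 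Your three observations are the right ingredients, but they must be deployed in a single simultaneous estimate of this kind rather than in two successive limits.
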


\begin{proof}
  Using summation by parts, we have 
 \begin{eqnarray*}
 && \Delta t\Delta x\sum_{n=0}^{N}\sum_{j=-\infty}^{\infty}\phi^n_j\sum_{k=1}^{r\vee 1}\big[q(u^n_{j},u^n_{j+k})-q(u^n_{j-k},u^n_{j})\big]W_k\nonumber\\
  &&\quad=-\Delta t\Delta x\sum_{n=0}^{N}\sum_{j=-\infty}^{\infty}\sum_{k=1}^{r\vee 1}(\phi_{j+k}-\phi_{j})q(u^n_{j},u^n_{j+k})W_k \nonumber\\
    &&\quad= -\Delta t\Delta x\sum_{n=0}^{N}\sum_{j=-\infty}^{\infty}\sum_{k=1}^{r\vee 1}\left(\frac{\phi_{j+k}-\phi_{j}}{k\Delta x}\right) q(u^n_{j},u^n_{j+k}) (kW_k) \Delta x) \nonumber\\
 &&\quad= -\Delta t (\Delta x )^2\sum_{n=0}^{N}\sum_{j=-\infty}^{\infty}\sum_{k=1}^{r}\frac{\phi_{j}-\phi_{j+k}}{k\Delta x}q_{j,j+k}\left(\frac{1}{\Delta x}\int_{(k-1)\Delta x}^{k\Delta x}\omega(s)ds\right)
 \end{eqnarray*}
 It can be further written as 
 \begin{align}\label{eq: form_unify}
 &  \Delta t\Delta x\sum_{n=0}^{N}\sum_{j=-\infty}^{\infty}\phi^n_j\sum_{k=1}^{r\vee 1}\big[q(u^n_{j},u^n_{j+k})-q(u^n_{j-k},u^n_{j})\big]W_k 
=
 -\int_{0}^{T}\int_{\R}\int_{0}^{\delta}
G(\Delta,x,t,h)
dh dx dt,  \nonumber
 \end{align}
 \noindent where 
 $$G(\Delta,x,t,h)
= \frac{\phi^{\Delta}(x,t)-\phi^{\Delta}(x+h^{\Delta}(h),t)}{h^{\Delta}(h)}q(u^{\Delta}(x),u^{\Delta}(x+h^{\Delta}(h)))\bar{\omega}^{\Delta}(h),$$
and
  $u^{\Delta}$ is defined in (\ref{eq:u_pw_constant}),
\begin{eqnarray*}
\phi^{\Delta}(x,t)&=&\sum_{j=-\infty}^{\infty}\sum_{n=0}^{\infty}\left( \frac{\phi(x_{j+\frac{1}{2}})-\phi(x_{j-\frac{1}{2}})}{\Delta x}(x-x_{j-\frac{1}{2}})+\phi(x_{j-\frac{1}{2}}) \right)
\mathbf{1}_{\mathcal{I}_j\times\mathcal{I}^{n}}(x,t),\\
\bar{\omega}_{k}&:=&\frac{1}{\Delta x}\int_{\hat{\mathcal{I}}_k}\omega(s)ds,\ \quad \mbox{for } \; 
\hat{\mathcal{I}}_k=[(k-1)\Delta x,k\Delta x),
\\
\bar{\omega}^{\Delta}(h)&=&\sum_{k=1}^{r}\bar{\omega}_{k} \mathbf{1}_{\hat{\mathcal{I}}_k}(h)=\sum_{k=1}^{r}\left(\frac{1}{\Delta x}\int_{\hat{\mathcal{I}}_k}\omega(s)ds\right) \mathbf{1}_{\hat{\mathcal{I}}_k}(h),\\
h^{\Delta}(h)&=&\sum_{k=1}^{r}(k\Delta x)\mathbf{1}_{\hat{\mathcal{I}}_k}(h).
\end{eqnarray*}
Note that these functions are piecewise constant except for $\phi^{\Delta}(x,t)$ which is piecewise linear  about $x$ on the interval $[x_{j-\frac{1}{2}},x_{j+\frac{1}{2}}]$. 
 We also take note of the following facts:\\
  \begin{equation} \label{eq: approx_a}  \lim\limits_{\Delta x\rightarrow 0}h^{\Delta}(h)=h, \quad\quad h\in(0,\delta] \end{equation}
   \begin{equation} \label{eq: approx_b} 
\lim_{\Delta x\rightarrow 0} \bar{\omega}^{\Delta}(h)=\omega(h),\quad\ \text{ for all } h\in(0,\delta];  \quad\quad 
 \end{equation}
  \begin{equation} \label{eq: approx_b2} 
 ||\bar{\omega}^{\Delta}||_{L^{1}(0,\delta)}=\int_{0}^{\delta}\bar{\omega}^{\Delta}(h)dh
=\sum_{k=1}^{r}\int_{(k-1)\Delta x}^{k\Delta x}\omega(s)ds\equiv 1,\quad \text{ for all } \delta>0.
 \end{equation}
 Meanwhile, since $ \phi^{\Delta}(x,t)\rightarrow \phi(x,t)$ uniformly, $\phi$ is continuous, and 
 $h^{\Delta}(h)\rightarrow h$, 
 \begin{equation} \label{eq: approx_c} 
 \lim_{\Delta x\rightarrow 0} \phi^{\Delta}(x+h^{\Delta}(h),t)\rightarrow \phi(x+h,t) \quad \mbox{ a.e } (x,t).
 \end{equation}
In addition, we have 
 \begin{eqnarray}  \label{eq: approx_d} 
\frac{\phi^{\Delta}(x,t)-\phi^{\Delta}(x+h^{\Delta}(h),t)}{h^{\Delta}(h)}\leq C||\phi_{x}||_{L^{\infty}({\R}\times{\R}^{+})}.
\end{eqnarray}

Furthermore, when (\ref{eq: converge_subsequence_nonlocal}) holds, apparently 
 \begin{equation}\label{eq: approx_e1} 
 \lim\limits_{k \rightarrow \infty}u^{\Delta_k,\delta}(x,t)=u^{*}(x,t),\quad\quad \mbox{  for any } t\in[0,T]. 
    \end{equation}
Moreover, by (\ref{eq:bded_continuity_u*}),  $u^{*}(\cdot, t)\in BV(\R)$,  so $u^{*}$ has at most countably many discontinuities and is continuous almost everywhere on $\R$.  
Hence for almost everywhere $(x,t)\in[a-\delta,b+\delta]\times[0,T]$, by (\ref{eq: approx_a}),
\begin{equation}\label{eq: approx_e2} 
\lim_{\Delta x\rightarrow 0}u^{*}(x+h^{\Delta }(h),t)=u^{*}(x+ \lim_{\Delta x\rightarrow 0}h^{\Delta }(h),t) = u^{*}(x+h,t), 
\end{equation}
These facts will be useful in the proof of this proposition and Lemma \ref{lemma_convergence_ae}.

(i)  We start by proving  (\ref{eq:convegence_NonocalEntropyIntegral}). For convenience of notation, we just denote $(\Delta x)_l$ as $\Delta x$, but we should always keep in mind:  these $\{(\Delta x)_l\}$ corresponds to the subsequence of $u^{\Delta}$ that converges to $u^{*}$ as in (\ref{eq: converge_subsequence_nonlocal}). \\ 
It suffices to show that,  when $\delta$ is fixed and $\Delta x\rightarrow 0$,  we have the convergence 
\begin{eqnarray*}
&&\lim_{\Delta x\rightarrow 0}\int_{0}^{T}\int_{\R}\int_{0}^{\delta}
G(\Delta,x,t,h) 
dh dx dt = \int_{0}^{T}\int_{{\R}}\int_{0}^{\delta}\frac{\phi(x)-\phi(x+h)}{h}q(u(x),u(x+h)) \omega^\delta(h)dhdxdt.
\end{eqnarray*}  

Actually, applying the dominated convergence theorem (conditions will be checked shortly), we have
\begin{eqnarray*}
&&\lim_{\Delta x\rightarrow 0}\int_{0}^{\delta}\left(\int_{{\R}^{+}\times\R} 
G(\Delta,x,t,h)
 dx dt\right)dh
=\int_{0}^{\delta}\lim_{\Delta x\rightarrow 0}\left(\int_{{\R}^{+}\times\R} 
G(\Delta,x,t,h)
 dx dt\right)dh\\
 &&\quad= 
 \int_{0}^{\delta}\int_{{\R}^{+}\times{\R}}[\phi(x,t)-\phi(x+h,t)]q(u(x,t),u(x+h,t))\frac{\omega^{\delta}(h)}{h}dx dtdh.
 \end{eqnarray*}

It remains to check two conditions for dominated convergence theorem: \\
\emph{Condition 1:} For a.e $h\in(0,\delta]$, 
\begin{eqnarray*} 
&&\lim_{\Delta x\rightarrow 0}\int_{0}^{T}\int_{\R} 
G(\Delta,x,t,h) dx dt
=\int_{0}^{T}\int_{\R}\frac{\phi(x,t)-\phi(x+h,t)}{h}q(u^{*}(x,t),u^{*}(x+h,t)) \omega^{\delta}(h) dx dt.\quad\quad
 \end{eqnarray*}
 This is based on Lemma \ref{lemma_convergence_ae} and (\ref{eq: approx_b}).

\noindent \emph{Condition 2:}  there exists a function $Y\in L^{1}(0,\delta)$, such that
$
\left|\int_{0}^{T}\int_{\R} 
G(\Delta,x,t,h)
 dx dt\right|\leq Y(h).
$

Actually, we have 
\begin{eqnarray*}
\left|\int_{0}^{T}\int_{a-\delta}^{b+\delta} 
 G(\Delta,x,t,h)
 dx dt\right|
 && \leq\bar{\omega}^{\Delta}(h)\cdot C\cdot\int_{0}^{T}\int_{a-\delta}^{b+\delta}\frac{\phi^{\Delta}(x,t)-\phi^{\Delta}(x+h^{\Delta}(h),t)}{h^{\Delta}(h)} dx dt\\
 &&
 \leq \bar{\omega}^{\Delta}(h)\cdot C\cdot T(b-a+2\delta)||\phi_{x}||_{L^{\infty}({\R}\times{\R}^{+})}=C_{\phi,u_0, g,\delta} \bar{\omega}^{\Delta}(h),
\end{eqnarray*}

\noindent  by the boundedness on $\phi$ (see (\ref{eq: approx_c})-(\ref{eq: approx_d})), the boundeness of $q$ (see (\ref {eq:q_boundedness})) and the boundedness of $u^{\Delta}$ (see (\ref{eq:bounded_pw_u})).  Noting that $\bar{\omega}^{\Delta}$ is integrable (\ref{eq: approx_b2}), we can take 
$
Y(h)=C_{\phi, u_0, g, \delta} \bar{\omega}^{\Delta}(h).
$

(ii) For convenience of notation, we just denote  $\delta_l$ as $\delta$, and $(\Delta x)_l$ as $\Delta x$, but we should always keep in mind:  these $\{\delta_l, (\Delta x)_l\}$ corresponds to the subsequence of $\{u^{\Delta,\delta}\}$ that converges to $u^{**}$ as in (\ref{eq: converge_subsequence_local}). \\  

\indent To establish  (\ref{eq:convegence_LocalEntropyIntegral}), it suffices to show that, when $\delta$  and $\Delta x$ both go to zero,  we have the convergence property
$$
\aligned
\lim_{(\delta,\Delta x)\rightarrow (0,0)}\int_{0}^{T}\int_{\R}\int_{0}^{\delta} 
G(\Delta,x,t,h)
dh dx dt
& = \int_{0}^{\infty}\int_{\R} \phi_x(x,t) q(u^{**}(x,t),u^{**}(x,t)) dx dt
\\
& = \int_{0}^{T}\int_{\R} \phi_x(x,t)sgn(u^{**}-c) (f(u^{**})-f(c)) dx dt.
\endaligned
$$
 Actually, for any $\varepsilon>0$,  when $\delta$ and $\Delta x$ are small enough, 
\begin{eqnarray*}
&&\left| \frac{\phi^{\Delta}(x,t)-\phi^{\Delta}(x+h^{\Delta}(h),t)}{h^{\Delta}(h)}q(u^{\Delta}(x),u^{\Delta}(x+h^{\Delta }(h)))-\phi_x(x)q(u^{**}(x),u^{**}(x)) \right|\\
&&\quad \leq 
\left|\frac{\phi^{\Delta}(x+h^{\Delta}(h))-\phi^{\Delta}(x)}{h^{\Delta}(h)}-\frac{\phi(x+h)-\phi(x)}{h}\right|
\left|q(u^{\Delta}(x),u^{\Delta}(x+h^{\Delta}))\right| \\
&&\qquad +\left|\frac{\phi(x+h)-\phi(x)}{h}-\phi_x(x)\right|
\left|q(u^{\Delta}(x),u^{\Delta}(x+h^{\Delta}))\right| \\
&&\qquad 
+ |\phi_x(x)| \left| q(u^{\Delta}(x),u^{\Delta}(x+h^{\Delta}(h)))-q(u^{**}(x),u^{**}(x))  \right|\\
&&\quad \leq 2\varepsilon C_{\phi,g}(|u^{\Delta}(x)|+|u^{\Delta}(x+h^{\Delta}(h))|+1)+C_{\phi,g }\left(|u^{\Delta}(x)-u^{**}(x)|\right.\\
&& \qquad\quad\left. +|u^{\Delta}(x+h^{\Delta})-u^{**}(x)|\right)\\
&&\quad \leq C_{\phi,g,u_0}\varepsilon+C_{\phi,g}\left(|u^{\Delta}(x)-u^{**}(x)|+|u^{\Delta}(x+h^{\Delta})-u^{**}(x+h^{\Delta})|\right.\\
&&\qquad\quad \left.+|u^{**}(x+h^{\Delta})-u^{**}(x)| \right)\\
&&\quad \leq C_{\phi,g,u_0}\varepsilon+C_{\phi,g}|u^{\Delta}(x+h^{\Delta})-u^{**}(x+h^{\Delta})|, \quad\quad
\mbox{ a.e } (x,t)\in\R\times\R^{+}.
\end{eqnarray*}
The  inequalities above are based on the boundedess of $\phi$ (\ref{eq: approx_c}-\ref{eq: approx_d}), the boundedness and Lipschitz continuity of $q$ (\ref{eq:q_Lip_continuity}, \ref{eq:q_boundedness}), and the convergence of $u^{\Delta, \delta}$ to $u^{**}$ as $\Delta x\rightarrow 0$ given in (\ref{eq: converge_subsequence_local}). 

Let the compact support of $\phi$ be in an interval $[a,b]\times[0,T]$. Using the above inequality,  and noting that $||\bar{\omega}^{\delta}||_{L^{1}(\R)}=1$  by (\ref{eq: approx_b}),  we have
{\footnotesize
\begin{eqnarray*}
\Omega := 
&&\left|\int_{0}^{T}\int_{a-\delta}^{b+\delta}\int_{0}^{\delta}
G(\Delta,x,t,h) 
dh dx dt
- \int_{0}^{T}\int_{a-\delta}^{b+\delta}\phi_x(x)q(u^{**}(x),u^{**}(x))  dx dt\right|\\
&&\quad \leq \int_{0}^{T}\int_{a-\delta}^{b+\delta}\int_{0}^{\delta}\left| 
G(\Delta,x,t,h) - \phi_x(x)q(u^{**}(x),u^{**}(x))
\right| 
dh dx dt\\
&&\quad \leq \int_{0}^{T}\int_{a-\delta}^{b+\delta}\int_{0}^{\delta}  \left(C_{\phi,g,u_0} \varepsilon +C_{\phi,g}|u^{\Delta}(x+h^{\Delta})-u^{**}(x+h^{\Delta})|\right)\bar{\omega}^{\Delta}(h)
dh dx dt, 
\end{eqnarray*}
}
thus 
\begin{eqnarray*}
\Omega \leq 
&&  C_{\phi,g,u_0} T (b-a+2\delta)\varepsilon +\int_{0}^{T}\int_{a-\delta}^{b+\delta}\int_{0}^{\delta}|u^{\Delta}(x+h^{\Delta})-u^{**}(x+h^{\Delta})|\bar{\omega}^{\Delta}(h)
dh dx dt\\
&&  \leq C_{\phi,g,u_0}\varepsilon+\int_{0}^{T}\int_{0}^{\delta}\int_{a-\delta+h^{\Delta}(h)}^{b+\delta+h^{\Delta}(h)}|u^{\Delta}(y)-u^{**}(y)|\bar{\omega}^{\Delta}(h) dy dh dt\\
&& \leq C_{\phi,g,u_0}\varepsilon+\int_{0}^{T}\int_{a-\delta}^{b+2\delta}|u^{\Delta}(y)-u^{**}(y)|dydt \left(\int_{0}^{\delta}\bar{\omega}^{\Delta}(h) dh\right)\\
&& = C_{\phi,g,u_0}\varepsilon+\int_{0}^{T}\int_{a-\delta}^{b+2\delta}|u^{\Delta}(x)-u^{**}(x)|dxdt.
\end{eqnarray*}

By (\ref{eq: converge_subsequence_local}), when $\Delta x$ is small enough, 
one has 
$\int_{0}^{T}\int_{a-\delta}^{b+2\delta}|u^{\Delta}(x)-u^{**}(x)|dxdt<\varepsilon$, 
and thus
{\footnotesize
\begin{eqnarray*}
&&\int_{0}^{T}\int_{a-\delta}^{b+\delta}\int_{0}^{\delta}
\int_{0}^{T}\int_{a-\delta}^{b+\delta}\int_{0}^{\delta}\left| 
G(\Delta,x,t,h) - \phi_x(x)q(u^{**}(x),u^{**}(x))
\right|
dh dx dt
\leq C_{\phi,g,u_0}\varepsilon,
\end{eqnarray*}
}
Now (\ref{eq:convegence_LocalEntropyIntegral}) is obtained, and the proof is completed.
\end{proof}

\begin{lemma}\label{lemma_convergence_ae}
Suppose all the assumptions of Proposition \ref{convergence_local_nonlocal}  hold.   
Let  $u^{(\Delta x)_l}$ be the subsequence that converges to $u^{*}$ as in (\ref{eq: converge_subsequence_nonlocal}), and for convenience we denote  $\{(\Delta x)_l\}$ as $\Delta x$. \\
Then for  a.e  $h\in(0,\delta]$, 
 \begin{eqnarray}\label{eq:term_convergence}
&&\lim_{\Delta x\rightarrow 0} \int_{0}^{T}\int_{\R}  
\frac{\phi^{\Delta}(x,t)-\phi^{\Delta}(x+h^{\Delta}(h),t)}{h^{\Delta}(h)}q(u^{\Delta}(x),u^{\Delta}(x+h^{\Delta }(h)))
 dx dt\quad\quad\quad\nonumber\\
&&\qquad\quad=
\int_{0}^{T}\int_{\R} \frac{\phi(x,t)-\phi(x+h,t)}{h}q(u^{*}(x,t),u^{*}(x+h,t))dx dt. 
 \end{eqnarray}
\end{lemma}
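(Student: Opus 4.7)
The plan is to show $\int_{0}^{T}\int_{\R} |A^{\Delta}(x,t,h) - A(x,t,h)|\,dx\,dt \to 0$ for a.e.\ $h \in (0,\delta]$, where $A^{\Delta}$ and $A$ denote the integrands on the left and right of \eqref{eq:term_convergence}. For fixed $h$, I split the difference by adding and subtracting the intermediate quantity
\[ \frac{\phi(x,t)-\phi(x+h,t)}{h}\, q\bigl(u^{\Delta}(x,t),u^{\Delta}(x+h^{\Delta}(h),t)\bigr), \]
yielding a ``$\phi$-piece'' and a ``$q$-piece.'' Since $\phi \in C^{\infty}_{c}$, the effective $x$-integration in both pieces is over a fixed compact set (enlarged by $\delta$), so the $L^{1}_{\mathrm{loc}}$-convergence of $u^{\Delta}$ to $u^{*}$ from \eqref{eq: converge_subsequence_nonlocal} upgrades to $L^{1}$-convergence on that set.

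For the $\phi$-piece, the smoothness of $\phi$ together with $\phi^{\Delta}\to \phi$ uniformly (at an $O(\Delta x)$ rate, since $\phi^{\Delta}$ is the piecewise linear interpolant of a $C^{\infty}$ function) and $h^{\Delta}(h)\to h$ from \eqref{eq: approx_a} imply that the bracketed difference quotient tends to $0$ uniformly in $(x,t)$ on $\mathrm{supp}(\phi)$. Combined with the $L^{\infty}$-bound on $q(u^{\Delta},u^{\Delta})$ coming from the discrete maximum principle \eqref{eq:MP_discrete} and the bound \eqref{eq:q_boundedness}, this contribution vanishes in $L^{1}$. For the $q$-piece, the factor $\tfrac{\phi(x)-\phi(x+h)}{h}$ is bounded by $\|\phi_{x}\|_{L^{\infty}}$ and compactly supported, and one application of the Lipschitz bound \eqref{eq:q_Lip_continuity} followed by the triangle inequality reduces matters to controlling the three quantities $|u^{\Delta}(x)-u^{*}(x)|$, $|u^{\Delta}(x+h^{\Delta})-u^{*}(x+h^{\Delta})|$, and $|u^{*}(x+h^{\Delta})-u^{*}(x+h)|$. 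The first two integrate to zero on the relevant compact sets by \eqref{eq: converge_subsequence_nonlocal}, after a trivial change of variables $y=x+h^{\Delta}(h)$ for the second.

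The main obstacle is the residual translation term $|u^{*}(x+h^{\Delta}(h),t)-u^{*}(x+h,t)|$, which cannot be absorbed by convergence of $u^{\Delta}$ alone. I handle it by invoking the BV-regularity of $u^{*}(\cdot,t)$ inherited from \eqref{eq:bded_continuity_u*} (ultimately from the BV hypothesis on $u_{0}$), which yields
\[ \int_{\R} |u^{*}(x+h^{\Delta}(h),t) - u^{*}(x+h,t)|\,dx \le |h^{\Delta}(h)-h|\cdot\|u^{*}(\cdot,t)\|_{BV(\R)} \le |h^{\Delta}(h)-h|\cdot\|u_{0}\|_{BV(\R)}. \]
Integrating this bound in $t \in [0,T]$ and using $h^{\Delta}(h)\to h$ from \eqref{eq: approx_a} gives the required vanishing. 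Collecting all contributions via the triangle inequality then yields \eqref{eq:term_convergence} for every $h \in (0,\delta]$ such that the partition-point technicalities in the definition of $h^{\Delta}$ are non-degenerate (i.e.\ away from a null set), completing the proof.
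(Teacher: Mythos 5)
Your proof is correct and follows the same overall architecture as the paper's: the same splitting into a ``$\phi$-piece'' and a ``$q$-piece'' via an intermediate term, the same reduction of the $q$-piece by the Lipschitz bound \eqref{eq:q_Lip_continuity} to the three differences $|u^{\Delta}(x)-u^{*}(x)|$, $|u^{\Delta}(x+h^{\Delta})-u^{*}(x+h^{\Delta})|$ and $|u^{*}(x+h^{\Delta}(h))-u^{*}(x+h)|$, and the same use of compact support to upgrade $L^1_{\mathrm{loc}}$ to $L^1$ on the relevant set. The one genuine divergence is in the residual translation term, which you correctly identify as the crux. The paper disposes of it qualitatively: since $u^{*}(\cdot,t)\in BV(\R)$ it is continuous off a countable set, so $u^{*}(x+h^{\Delta}(h),t)\to u^{*}(x+h,t)$ for a.e.\ $x$ by \eqref{eq: approx_a}, and dominated convergence (using the $L^\infty$ bound \eqref{eq:bded_continuity_u*}) finishes. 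You instead invoke the quantitative $L^1$-continuity of translations for BV functions, $\int_{\R}|v(\cdot+s)-v|\,dx\le |s|\,\|v\|_{BV}$, obtaining the explicit rate $|h^{\Delta}(h)-h|\cdot\|u_0\|_{BV(\R)}\le \Delta x\,\|u_0\|_{BV(\R)}$. Both are legitimate under the standing hypothesis $u_0\in BV(\R)$; your version is sharper (it gives a rate, uniform in $h$, and avoids the a.e.-continuity argument), while the paper's version would survive with only an $L^\infty$ bound plus a.e.\ continuity of the limit, a marginally weaker structural requirement. Either way the conclusion holds for a.e.\ $h\in(0,\delta]$, and your exclusion of the degenerate partition points of $h^{\Delta}$ matches the paper's ``a.e.\ $h$'' qualifier.
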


\begin{proof}
 Denote the support of $\phi(x,t)$ is in $[a,b]\times[0,T]$,  then (\ref{eq:term_convergence}) is equivalent to
\begin{eqnarray*}
&&\lim_{\Delta x\rightarrow 0} \int_{0}^{T}\int_{a-\delta}^{b+\delta} 
\frac{\phi^{\Delta}(x,t)-\phi^{\Delta}(x+h^{\Delta}(h),t)}{h^{\Delta}(h)}q(u^{\Delta}(x),u^{\Delta}(x+h^{\Delta }(h)))
 dx dt\quad\quad\quad\nonumber\\
&&\quad= \int_{0}^{T}\int_{a-\delta}^{b+\delta} \frac{\phi(x,t)-\phi(x+h,t)}{h}q(u^{*}(x,t),u^{*}(x+h,t))dx dt. 
\end{eqnarray*}
By the fact that
$$
\left.
\begin{array}{ll} \displaystyle
f_{n}\rightarrow f  \mbox{ a.e, } & \sup_{n}|f_n|\leq C\mathbf{1}_{E}, \quad \sup_{x}|f|< +\infty, \\
\displaystyle g_{n}\rightarrow g \in L^{1},   &  \sup_{n,x}g< +\infty  \quad\quad\quad\quad\quad\quad\quad\quad\quad
\end{array}
\right\}\Rightarrow\quad  f_{n}g_{n}\rightarrow fg\quad  \mbox{   in } L^{1}, 
$$
we only need to show the following two conditions: for a.e. $h\in(0,\delta]$, we have 
(i):
 \begin{equation*}
 \left\{
\begin{array}{ll}
 \displaystyle \lim_{\Delta x\rightarrow 0}\frac{\phi^{\Delta}(x,t)-\phi^{\Delta}(x+h^{\Delta}(h),t)}{h^{\Delta}(h)}=\frac{\phi(x,t)-\phi(x+h,t)}{h}  \quad \mbox{   a.e   } (x,t)\in{\R}\times{\R}^{+}
 \\[0.5cm]
\displaystyle  \sup_{\Delta x} \left|\frac{\phi^{\Delta}(x,t)-\phi^{\Delta}(x+h^{\Delta}(h),t)}{h^{\Delta}(h)}\right|< C\mathbf{1}_{E}, \quad  \sup_{x,t}\left|\frac{\phi(x,t)-\phi(x+h,t)}{h}\right|< +\infty,
\end{array}
\right.
 \end{equation*}
 where $E\subset \R$ is a bounded set; and  (ii): 
  \begin{eqnarray*}
  \left\{
  \begin{array}{l}\displaystyle \lim_{\Delta x\rightarrow 0}\int_{0}^{T}\int_{a-\delta}^{b+\delta}|q(u^{\Delta}(x),u^{\Delta}(x+h^{\Delta }(h)))-q(u^{*}(x),u^{*}(x+h))|
 dx dt=0,
\\[0.5cm]
\displaystyle \sup_{\Delta x, x,t} |q(u^{\Delta}(x),u^{\Delta}(x+h^{\Delta }(h)))|<+\infty.
\end{array}
\right.
 \end{eqnarray*}
 
We note first that (i) is obvious by (\ref{eq: approx_c}-\ref{eq: approx_d}). Next we show (ii). The boundedness of $q$ is established by the boundedness of $q$  (\ref{eq:q_boundedness}) and the boundedness of $u^{\Delta}$ (\ref{eq:bounded_pw_u}). 
By the Lipschitz continuity of $q$ (\ref{eq:q_Lip_continuity}), 
 \begin{eqnarray*}
&& \left|\int_{0}^{T}\int_{a-\delta}^{b+\delta}q(u^{\Delta}(x),u^{\Delta}(x+h^{\Delta }(h)))-q(u^{*}(x),u^{*}(x+h))
 dx dt\right|\\
 &&\quad \leq \int_{0}^{T}\int_{a-\delta}^{b+\delta}|q(u^{\Delta}(x),u^{\Delta}(x+h^{\Delta }(h)))-q(u^{*}(x),u^{*}(x+h^{\Delta}(h)))|
 dx dt\\
 &&\qquad +\int_{0}^{T}\int_{a-\delta}^{b+\delta}|q(u^{*}(x),u^{*}(x+h^{\Delta}(h)))-q(u^{*}(x),u^{*}(x+h))|
 dx dt\\
  &&\quad \leq C\int_{0}^{T}\int_{a-\delta}^{b+\delta}|u^{\Delta}(x)-u^{*}(x)|+
 |u^{\Delta}(x+h^{\Delta }(h))-u^{*}(x+h^{\Delta }(h))|
 dx dt\\
&&\qquad+ C\int_{0}^{T}\int_{a-\delta}^{b+\delta}|u^{*}(x+h^{\Delta }(h))-u^{*}(x+h)|
 dx dt  := I+II.
  \end{eqnarray*}
 
 For the first integral, assumption  (\ref{eq: converge_subsequence_nonlocal})   yields 
  \begin{eqnarray*}
&& \lim_{\Delta x\rightarrow 0}I 
 =\lim_{\Delta x\rightarrow 0}\int_{0}^{T}\int_{a-\delta}^{b+\delta}|u^{\Delta}(x)-u^{*}(x)|dxdt
+\lim_{\Delta x\rightarrow 0}\int_{0}^{T}\int_{a-\delta+h^{\Delta}(h)}^{b+\delta+h^{\Delta}(h)}
 |u^{\Delta}(y)-u^{*}(y)|
 dy dt\\
  &&\quad \leq\lim_{\Delta x\rightarrow 0}\int_{0}^{T}\int_{a-\delta}^{b+\delta}|u^{\Delta}(x)-u^{*}(x)|dxdt
 +\lim_{\Delta x\rightarrow 0}\int_{0}^{T}\int_{a-\delta}^{b+\delta+h}
 |u^{\Delta}(y)-u^{*}(y)|
 dy dt=0,\quad\quad 
   \end{eqnarray*}
   with the last equality coming from (\ref{eq: converge_subsequence_nonlocal}). 
 For the second integral, 
by the boundedness of $u^{*}$ (\ref{eq:bded_continuity_u*}),   we can apply  dominated convergence theorem:
 \begin{eqnarray*}
\lim_{\Delta x\rightarrow 0} II&=& \int_{0}^{T}\int_{a-\delta}^{b+\delta}\lim_{\Delta x\rightarrow 0}|u^{*}(x+h^{\Delta }(h))-u^{*}(x+h)|
 dx dt= 0, 
\end{eqnarray*}
the last equality is due to the (almost everywhere) continuity of $u^{*}$ (\ref{eq: approx_e2}). 
So we get the condition (ii) which completes the proof of this lemma.
\end{proof}


\subsection{Existence theory}

Based on the proof of Theorem \ref{thm_convergence_local_nonlocal_entropy_solution}, we also
obtain the existence of the entropy solution $u^{\delta}$ for the nonlocal model  (\ref{eq:Cauchy}).  Note that
the uniqueness of the nonlocal entropy solution can be found in Theorem \ref{thm_uniqueness}. These statements
are summarized in the following theorem.

\begin{theorem}\label{corollary_existence_nonlocal_entropy_solution}{{\bf(Well-posedness theory for the nonlocal  model)}}
Consider the nonlocal conservation law (\ref{eq:Cauchy}) and
 assume that $g$ satisfies the monotonicity and regularity conditions 
 (\ref{eq:g_consistent}), (\ref{eq:g_Lip}), and (\ref{eq:g_monotone}) and that the kernel 
$\omega^{\delta}$  satisfies the condition (\ref{eq:w_density}).  
Then, for every initial data $u_0\in L^{1}(\R)\cap L^{\infty}(\R)$, there exists a unique entropy solution $u^{\delta}$ to (\ref{eq:Cauchy}) with $u(0)=u_0$ and, moreover, the following properties hold: 

(a) \quad If $u_0\in BV(\R)$, then the map $t\mapsto u^{\delta}(\cdot,t)$ is Lipschitz continuous with values in $L^{1}(\R)$ and
$$
 ||u^{\delta}(\cdot,t)||_{BV(\R)}\leq ||u_0||_{BV(\R)},  \qquad t \geq 0.
$$

(b) \quad For any two solutions $u^{\delta}, v^{\delta}$ and initial data $u_0, v_0$, the contraction property holds: 
$$
 ||u^{\delta}(\cdot, t)-v^{\delta}(\cdot, t)||_{L^1(\R)}\leq ||u_0-v_0||_{L^{1}(\R)}, \qquad t \geq 0.
$$ 

(c) \quad Monotonicity property: the inequality $u_0\leq v_0$ implies $u^{\delta}\leq v^{\delta}$.

(d) \quad Maximum principle: the inequalities $a\leq u_0\leq b$ imply $a\leq u^{\delta}\leq b$.
\end{theorem}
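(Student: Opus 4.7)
The plan is to read off most of the assertions from the convergence theorem
\ref{thm_convergence_local_nonlocal_entropy_solution} and from the uniqueness
theorem \ref{thm_uniqueness}, using the discrete scheme \eqref{eq:nonlocal_ForwardInTime}
as a constructive device and passing stability estimates to the limit
$\Delta x\to 0$ with $\delta$ fixed. Concretely, given any $u_0\in L^1(\R)\cap L^\infty(\R)$
I would choose $\Delta x$ and $\Delta t$ so that the CFL condition \eqref{eq:CFL}
holds, form the piecewise-constant interpolant $u^{\Delta,\delta}$
defined in \eqref{eq:u_pw_constant}, and invoke
Theorem \ref{thm_convergence_local_nonlocal_entropy_solution} to obtain an
entropy solution $u^\delta$ as its $L^1_{loc}$-limit. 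Uniqueness of such a
$u^\delta$ and item (b) are an immediate consequence of Theorem
\ref{thm_uniqueness}. Hence only items (a), (c), (d) require additional work.

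For item (d), the discrete maximum principle \eqref{eq:MP_discrete}
in Lemma \ref{lemmaMonotone} gives $a\le u^{\Delta,\delta}\le b$ pointwise
whenever $a\le u_0\le b$, and this inequality is preserved by the
$L^1_{loc}$-limit, yielding $a\le u^\delta\le b$ a.e. For item (c), I would use
that on BV data the scheme is order-preserving: if $u_0\le v_0$ a.e., then the
cell averages satisfy $u^0_j\le v^0_j$, and the monotonicity of $H$ in
\eqref{eq:H_monotone} gives $u^n_j\le v^n_j$ for all $n,j$, which passes to
the limit to yield $u^\delta\le v^\delta$. For general $u_0,v_0\in L^1\cap L^\infty$
I would first regularize both data in BV, apply the above, and then use the
$L^1$-contraction (b) to pass to the limit in the regularization.

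For item (a), assume now $u_0\in BV(\R)$. The discrete BV bound
\eqref{eq:BVbound_numericalSolution} of Lemma \ref{lemma_BVbound_numericalSolution}
and the lower semicontinuity of the total variation with respect to
$L^1_{loc}$-convergence give $\|u^\delta(\cdot,t)\|_{BV(\R)}\le\|u_0\|_{BV(\R)}$.
To obtain Lipschitz continuity of $t\mapsto u^\delta(\cdot,t)$ into $L^1(\R)$,
I would apply the discrete estimate of Lemma \ref{lemma_H(u0)-u0} iteratively:
for any two time levels $n_1<n_2$,
$$
\|u^{n_2}-u^{n_1}\|_{L^1(\Delta)}\le\sum_{n=n_1}^{n_2-1}\|H(u^n)-u^n\|_{L^1(\Delta)}
\le C(n_2-n_1)\Delta t\,\|u_0\|_{BV(\R)},
$$
where the constant $C$ is independent of $\Delta x,\Delta t,\delta$. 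Passing to
the limit $\Delta x\to 0$ along the converging subsequence yields
$\|u^\delta(\cdot,t)-u^\delta(\cdot,s)\|_{L^1(\R)}\le C|t-s|\,\|u_0\|_{BV(\R)}$,
the desired Lipschitz-in-time regularity.

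The main obstacle I foresee is making rigorous the approximation step used
for monotonicity on non-BV data and, more subtly, ensuring that all the
qualitative properties (maximum principle, ordering, BV bound) are inherited
by the limiting function even though $L^1_{loc}$-convergence a priori only
controls pointwise values on a subsequence. The maximum principle and
ordering pass because they are preserved under a.e.\ convergence along the
selected subsequence; the BV bound passes by lower semicontinuity; and the
Lipschitz-in-time bound passes because the estimate is uniform in
$\Delta x$. The remaining bookkeeping -- extending (c) from BV data to
$L^1\cap L^\infty$ data via a density and contraction argument -- is routine
once (b) is in hand.
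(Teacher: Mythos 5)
Your proposal is correct and follows exactly the route the paper intends: the paper itself offers no detailed proof of this theorem, merely stating that existence follows from the convergence analysis of the scheme (Theorem \ref{thm_convergence_local_nonlocal_entropy_solution}) and uniqueness from Theorem \ref{thm_uniqueness}, so your derivation of (a)--(d) from the discrete maximum principle, the monotonicity of $H$, the discrete BV bound with lower semicontinuity of total variation, and the iterated estimate of Lemma \ref{lemma_H(u0)-u0} supplies precisely the bookkeeping the authors leave implicit. No gaps.
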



\section*{Acknowledgment}

The work of the first author (QD) was supported in part by the NSF grant
DMS-1558744, the AFOSR MURI Center for material failure prediction through peridynamics,
and the ARO MURI Grant W911NF-15-1-0562. This work was done when the third author (PLF) enjoyed the hospitality of the Courant Institute of Mathematical Sciences at New York University.


\end{document}